\documentclass[preprint,review,sort&compress,3p]{elsarticle}

\usepackage{adjustbox}
\usepackage{graphicx}
\usepackage{enumerate}
\usepackage{amscd}
\usepackage{mathrsfs}
\usepackage{amssymb}
\usepackage{pifont}
\usepackage{amsmath}
\usepackage{amsthm}  
\usepackage{yhmath}
\usepackage{amsfonts}
\usepackage{booktabs}
\usepackage{makecell}
\usepackage{graphicx}
\usepackage{subcaption} 
\usepackage{epstopdf}
\usepackage{float}
\usepackage{longtable}
\usepackage{multirow}
\usepackage{bm}
\usepackage{algorithm}
\usepackage{algorithmicx}
\usepackage{dsfont}
\usepackage{algpseudocode}
\usepackage{diagbox}
\usepackage{epsfig}

\usepackage[
ragged
]{sidecap}   
\sidecaptionvpos{figure}{t} 
\usepackage{caption}
\usepackage{colortbl}
\usepackage{array, boldline, makecell, booktabs}
\usepackage{amsthm}
\usepackage{amsmath}
\usepackage{float}
\usepackage[colorlinks,
linkcolor=blue,
anchorcolor=blue,
citecolor=blue]{hyperref}

\usepackage{tikz}
\newcommand*{\circled}[1]{\lower.7ex\hbox{\tikz\draw (0pt, 0pt)%
	circle (.5em) node {\makebox[1em][c]{\small #1}};}}
	\newtheorem{thm}{Theorem }[section]

	\newtheorem{lem}{Lemma }[section]
	\newtheorem{dnt}{Definition}[section]
	\newtheorem{rk}{Remark }[section]
	
	\newtheorem{exm}{Example}[section]
	
	\journal{}
    \makeatletter
    \def\ps@pprintTitle{%
       \let\@oddhead\@empty
       \let\@evenhead\@empty
       \def\@oddfoot{\centerline{\thepage}}%
       \let\@evenfoot\@oddfoot}
    \makeatother
	\begin{document}

\begin{frontmatter}
	

	\title{{  Error Analysis of Parameter Prediction via Gaussian Process Regression and Its Application to Weighted Jacobi Iteration}}
	
	\author{Tiantian Sun$^{a}$, Juan Zhang$^{a,}$\footnote{Corresponding author.~~Email: zhangjuan@xtu.edu.cn  (J.Zhang)}\\
	$^a$ {\it Key Laboratory of Intelligent Computing and Information Processing of Ministry of Education, \\
		Key Laboratory for Computation and Simulation in Science and Engineering ,\\
		School of Mathematics and Computational Science, Xiangtan University, Xiangtan, Hunan,{\rm 411105},PR China}
	}
	\begin{abstract}
		
In this paper, we introduce a novel theoretical framework for Gaussian process regression error analysis, leveraging a function-space decomposition. Based on this framework, we develop a weighted Jacobi iterative method that utilizes Gaussian process regression for parameter prediction and provide a corresponding convergence analysis. Moreover, the convergence conditions are designed to be compatible with other error bounds, enabling a more general analysis. Experimental results show that the parameters predicted based on Gaussian process regression significantly accelerate the convergence speed of Jacobi iterations.
		
	\end{abstract}
	
	\begin{keyword}
		Gaussian process regression, Error bounds, Predictive distribution, Parameter prediction, Jacobi iteration
	\end{keyword}
\end{frontmatter}


\section{Introduction} \label{sec:1}

{  

The Jacobi iteration is a classical stationary iteration for solving linear systems 
$$Ax=b.$$ 
The weighted Jacobi iteration is a parameterized version of the Jacobi iteration, where a relaxation factor is introduced to potentially accelerate convergence~\cite{Y-2003}.
Furthermore, Anderson extrapolation has been applied to accelerate the Jacobi iteration, leading to the Anderson Jacobi and alternating Anderson Jacobi iterations~\cite{P-P-J-2016}. 
For finite-difference approximation of elliptic equations on large grids, \cite{X-R-2014} proposes a schedule of over- and under-relaxations of the Jacobi iteration. For solving symmetric positive semidefinite linear systems, \cite{L-Q-K-H-2025} integrates Nesterov's acceleration technique with the Jacobi and weighted Jacobi methods. This method improves convergence while maintaining the methods' inherent suitability for parallel implementation.

For these parameterized iterations, convergence is governed by the spectral radius of the corresponding iteration matrix, and the relaxation parameter that minimizes this spectral radius depends on the eigenvalue spectrum of $A$. 
However, for large scale linear systems, it is generally difficult to compute the spectral radius of $A$, which makes the determination of optimal parameters challenging in practice.
}

Notably, for numerical algorithms where parameter selection is challenging, Gaussian process  regression (GPR) provides an effective data-driven strategy for parameter prediction. GPR has been widely applied in many fields, including machine learning, statistical modeling, optimization, and engineering~\cite{C-C-2006, R-M-L-A-D-2014, W-P-W-2011, M-D-C-2013}. Recently, GPR has been employed for parameter optimization in numerical methods, including splitting parameter prediction in the general alternating-direction implicit framework,  Matrix splitting iteration methods, solution approximation in nonlinear PDEs, and parameter identification in inverse problems~\cite{F-D-S-M-2013,J-S-Z-2022, K-J-Q-2023, Y-B-H-A-2021}. This not only saves the time required for parameter computation but also significantly enhances the algorithm's performance. However, evaluating the accuracy of these predicted parameters to validate the effectiveness of the parameter selection approach remains a critical concern.

For many methods based on GPR, probabilistic uniform error bounds have been extensively studied \cite{T-T-F-2023}. Under the weak assumption of Lipschitz continuity of the covariance kernel and the unknown function, a directly computable probabilistic uniform error bound has been established in \cite{A-J-S-2019}. By constructing a confidence region in the hyperparameter space, \cite{A-A-S-2022} provides a probabilistic upper bound for the model error of a Gaussian process with arbitrary hyperparameters. Similarly, by bounding the information gain for kernel classes, the  regret bounds for Gaussian process optimization have been established in \cite{N-A-S-M-2012}. However, it remains challenging to incorporate such probabilistic error bounds into the convergence analysis of numerical algorithms.
Building upon the low-complexity assumptions in concentration inequalities and reproducing kernel Hilbert spaces (RKHS), \cite{R-L-M-2025} derives tighter probabilistic and deterministic error bounds that improve upon the foundational work of \cite{K-A-M-T-D-2022}.

Due to the fact that noise-free GPR, a Bayesian nonparametric method also known as Kriging or Wiener-Kolmogorov prediction, is mathematically equivalent to an interpolation problem \cite{M-P-D-B-2018,S-J-P-E-2023}, this property enables us to derive a deterministic error bound for it.
Error bounds for the predicted function in GPR under kernel misspecification have been established using an interpolation-based approach  \cite{W-B-2022,R-W-2020}.
To address both epistemic and computational misspecification, \cite{D-R-2025} employs an approach based on interpolation to analyze the approximation error of predicted function $f^*$ by its Gaussian process interpolant when the kernels are misspecified.

With the widespread application of GPR in numerical computations, accurately quantifying the impact of its prediction error on algorithm performance has become crucial.
{  Here, starting from the weighted Jacobi iteration (WJI), we use GPR as the strategy for selecting the optimal weight and provide a convergence analysis of the iterative method under the parameter predicted by GPR.}
In this paper, we propose a novel theoretical framework based on function space decomposition, which presents a new proof method for error bound by representing the prediction function as a combination of finite point interpolation and global correction term. 
This universal framework can be naturally extended to different kernel functions and various noise distribution scenarios. 
Furthermore, we have developed an application of this framework to accelerate weighted Jacobi iteration, providing a convergence analysis with general and flexible conditions that can be readily combined with other GPR error bounds.
Numerical results demonstrate that the parameter obtained by GPR can accelerate the convergence of the {  WJI}.  Meanwhile, by comparing the prediction error bounds of GPR with different kernel functions, it is found that when a proper kernel function is selected, the prediction error will decrease, thereby accelerating the convergence speed of the algorithm.

The rest of the paper is organized as follows. In Section \ref{sec:error_bounds}, we establish the new proof method for error bound. In Section \ref{WJI-GPR}, we propose the  WJI based on GPR and give the corresponding convergence analysis.  Numerical experiments are performed in Section \ref{Experiments} to validate the proposed method and verify the feasibility of applying GPR to parameter prediction with this error boundary. We conclude with a summarizing discussion in Section \ref{Conclusion}.

\section{Error bounds for Gaussian process regression}\label{sec:error_bounds}
\subsection{Notation}
We start with introducing the basic definitions and concepts.
Let $\Omega \subseteq \mathbb{R}^d, d\in \mathbb{N}$ and denote $C(\Omega)$ as the function space consisting of functions that are one times continuously differentiable on $\Omega$.

\begin{dnt}[\cite{P-A-R-M-2004}] 
	A continuous kernel $\kappa: \Omega \times \Omega \rightarrow \mathbb{C}$ is called positive definite on $\Omega \subseteq \mathbb{R}^d$ if for all $N \in \mathbb{N}$, all pairwise distinct $X = \{x_1,\dots,x_N\}\subseteq \Omega$, and all $\alpha\in \mathbb{C}^N \backslash\{0\}$ we have 
	$$
	\sum\limits_{i=1}^N \sum\limits_{j=1}^N \alpha_i \bar{\alpha_j} \kappa(x_i,x_j) >0
	$$
\end{dnt}

\begin{dnt}[\cite{P-A-R-M-2004}] Let $\mathcal{H}$ be a real Hilbert space of functions $f: \Omega\rightarrow\mathbb{R}$ with inner product $\langle\cdot,\cdot\rangle_\mathcal{H}$. A function $\kappa: \Omega\times \Omega \rightarrow \mathbb{R}$ is called a reproducing kernel for $\mathcal{H}$ if \\
	(1): $\kappa(\cdot,y)\in \mathcal{H}$ for all $y\in \Omega$, \\
	(2): $f(y) = \langle f,\kappa(\cdot,y)\rangle_\mathcal{H}$ for all $f \in \mathcal{H}$ and all $y\in \Omega$. 
\end{dnt}

\begin{dnt}[\cite{R-1999}]
	\label{native_space}
	If a symmetric positive definite function $\kappa:\Omega \times \Omega \rightarrow \mathbb{R}$	is the reproducing kernel of a real Hilbert space { $\mathcal{H}$} of real-valued functions on $\Omega$, then { $\mathcal{H}$} is the native space for $\kappa$.
\end{dnt}

{ 
Consequently, the native space $\mathcal{H}$ is precisely the Hilbert space associated with the kernel $\kappa$. It is the unique Reproducing Kernel Hilbert Space (RKHS) associated with $\kappa$, 
it can be equivalently defined as the completion of $\operatorname{span}\{\kappa(\cdot, y): y \in \Omega\}$ with respect to the norm induced by the inner product $\langle \cdot, \cdot \rangle_{\mathcal{H}}$.
}
 Furthermore we have $\Vert f\Vert_\mathcal{H} = \sqrt{\langle f,f \rangle_\mathcal{H}} = \sqrt{\sum\limits_{i=1}^N\sum\limits_{j = 1}^N \alpha_i \bar{\alpha_j} \kappa(x_i,x_j) }$ which implies both the magnitude and smoothness of  $f\in \mathcal{H}$, decreases as $f$ becomes smoother and increases as $f$ becomes less smooth.

\subsection{Proof Framework for the Error Bound of Gaussian Process Regression}
We start from noise-free GPR and suppose that training set $(x_i,y_i)$ satisfy the following model without noise
$$
y_i = f(x_i),~i = 1,2, \dots, N,
$$
where $x_i\in \Omega\subset\mathbb{R}^d$. For notational convenience, we denote $X_n = \{x_1,x_2,\dots,x_n\}$. To Without loss of generality, assume true function $f$ follows a zero mean Gaussian process~\cite{C-C-2006}, i.e. $Y = (f(x_1),f(x_2), \dots, f(x_N))^T \sim \mathcal{N}(0,\Sigma)$
, where
$$
\Sigma = 
\begin{pmatrix}
	\kappa(x_1,x_1) & \cdots & \kappa(x_1,x_N) \\
	\vdots			& \ddots & \vdots 	     \\
	\kappa(x_N,x_1) & \cdots & \kappa(x_N,x_N) \\
\end{pmatrix},
$$
$\kappa(\cdot,\cdot)$ is covariance function. 
Given new input data $\boldsymbol{z}$, for simplicity, we denote $K_{XX} = (\kappa(x_i,x_j))$ and $K_X(\boldsymbol{z}) = (\kappa(x_1,\boldsymbol{z}), \kappa(x_2,\boldsymbol{z}), \dots, \kappa(x_N,\boldsymbol{z}))$, then 
$$
\begin{pmatrix}
	Y\\f^*
\end{pmatrix}
\sim
\mathcal{N}\begin{pmatrix}
	\begin{bmatrix}
		0 \\ 0
	\end{bmatrix}, 
	\begin{bmatrix}
		K_{XX} & K_X(\boldsymbol{z})^T \\ K_X(\boldsymbol{z}) &\kappa(\boldsymbol{z},\boldsymbol{z})
	\end{bmatrix}
\end{pmatrix}. 
$$

{  If the kernel function $\kappa$ is positive definite and the training inputs $X$ are distinct, then the kernel matrix $K_{XX}$ is invertible. In practice, a small regularization term $\eta I$ (with $0<\eta\ll 1$) is often added to $K_{XX}$ to improve numerical stability and avoid ill-conditioning, ensuring that $K_{XX} + \eta I$ is invertible. In practice, we set $\eta = 10^{-4}$. For simplicity in the analysis, we assume $K_{XX}$ is invertible.
}

{  Then} the conditional expectation and variance of $f(\boldsymbol{z})$ are given by 
\begin{equation}
	\label{eq:mean}
	\mathbb{E} ( f(\boldsymbol{z}) \vert Y ) = K_X(\boldsymbol{z}) K_{XX}^{-1} Y,
\end{equation}
\begin{equation}
	\label{eq:variance}
	\text{Var}( f(\boldsymbol{z}) \vert Y ) = \kappa(\boldsymbol{z},\boldsymbol{z})-K_X(\boldsymbol{z}) K_{XX}^{-1} K_X(\boldsymbol{z}). 
\end{equation}

Notice that (\ref{eq:mean}) is a natural predict function of $f(\boldsymbol{z})$ which is denoted as  $f^*(\boldsymbol{z})$, i.e. 
$$
f^*(\boldsymbol{z}) = K_X(\boldsymbol{z}) K_{XX}^{-1} Y.
$$

To get the error bound between true function $f$ and the predicted function $f_*$, 
the function $f(\boldsymbol{z})$ can be first expressed as the sum of its projection onto the subspace spanned by $\{p_1,p_2,\dots, p_q\}$ and a global correction term in the $\mathcal{H}$, where $p_i$, $i = 1,\dots,q$ are linearly independent functions.

\begin{thm}
	\label{thm_Xi}
	Assume $f(\boldsymbol{z}) = \sum\limits_{j=1}^{\infty} \beta_j \kappa (z,x_j) \in \mathcal{N}_\mathcal{H}$, where $\kappa(z,x)$ is a reproducing kernel associated with a Hilbert space $\mathcal{H}$. Let  $p_1,p_2,\dots,p_q$ be a set of linearly independent functions in $C(\Omega)$. 
	For a subset $\Xi = \{\xi_1,\xi_2,\dots,\xi_q \} \subseteq \Omega$, the function $f(z)$ can be rewritten as:
	\begin{equation}
		\label{pro_f_z}
		f(\boldsymbol{z}) = \sum\limits_{i = 1}^q f(\xi_i) p_i(\boldsymbol{z}) +\left\langle f,\kappa(\cdot,\boldsymbol{z})-\sum\limits_{i= 1}^q p_i(\boldsymbol{z})\kappa(\cdot,\xi_i) \right\rangle_\mathcal{H}.
	\end{equation}
	
\end{thm}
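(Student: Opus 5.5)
The identity (\ref{pro_f_z}) is essentially an algebraic consequence of the reproducing property, so my plan is to verify it directly rather than to construct anything. The key tool is condition (2) of the definition of a reproducing kernel, $f(y)=\langle f,\kappa(\cdot,y)\rangle_\mathcal{H}$ for all $y\in\Omega$. I will use it at the point $\boldsymbol{z}$ and at each node $\xi_i\in\Xi$. Since $\Xi\subseteq\Omega$, each $\kappa(\cdot,\xi_i)$ lies in $\mathcal{H}$ by condition (1), so the inner product on the right-hand side is well defined. The coefficients $p_i(\boldsymbol{z})$ are just real numbers once $\boldsymbol{z}$ is fixed.

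Concretely, I fix $\boldsymbol{z}\in\Omega$ and expand the inner product using linearity in the second argument:
\begin{equation*}
\Big\langle f,\kappa(\cdot,\boldsymbol{z})-\sum_{i=1}^q p_i(\boldsymbol{z})\kappa(\cdot,\xi_i)\Big\rangle_\mathcal{H}
=\langle f,\kappa(\cdot,\boldsymbol{z})\rangle_\mathcal{H}-\sum_{i=1}^q p_i(\boldsymbol{z})\langle f,\kappa(\cdot,\xi_i)\rangle_\mathcal{H}.
\end{equation*}
Applying the reproducing property to each term turns the right-hand side into $f(\boldsymbol{z})-\sum_{i=1}^q p_i(\boldsymbol{z})f(\xi_i)$. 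Moving the finite sum to the other side then gives exactly (\ref{pro_f_z}).

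The only genuine point to check is that $f$ really belongs to $\mathcal{H}$, so that the reproducing property applies to it. The hypothesis writes $f=\sum_j\beta_j\kappa(\cdot,x_j)$ as an element of the native space. I will note that $\mathcal{H}$ is the completion of $\operatorname{span}\{\kappa(\cdot,y)\}$, so this series is understood as an $\mathcal{H}$-limit. Point evaluation is continuous on an RKHS, with $|g(y)|\le\|g\|_\mathcal{H}\sqrt{\kappa(y,y)}$, so the reproducing identity passes to the limit. I expect this to be the only slightly delicate step, and it is routine.

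Finally, I will remark that linear independence and continuity of $p_1,\dots,p_q$ are not used for the identity itself. These hypotheses matter only later, for instance when the $p_i$ are chosen as Lagrange-type functions with $p_i(\xi_j)=\delta_{ij}$. In that case the correction term vanishes at the nodes and can be bounded by Cauchy--Schwarz, giving $\|f\|_\mathcal{H}\,\|\kappa(\cdot,\boldsymbol{z})-\sum_i p_i(\boldsymbol{z})\kappa(\cdot,\xi_i)\|_\mathcal{H}$. That bound is what will drive the subsequent error estimate.
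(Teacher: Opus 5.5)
Your proof is correct and is essentially the paper's argument: both verify the identity directly from linearity of the inner product and the reproducing property. The only difference is that the paper substitutes the series $f=\sum_j\beta_j\kappa(\cdot,x_j)$ and applies the reproducing property term by term, tacitly exchanging the infinite sum with the inner product. You instead apply $f(y)=\langle f,\kappa(\cdot,y)\rangle_\mathcal{H}$ to $f$ itself, which needs only $f\in\mathcal{H}$ and is slightly cleaner.
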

\begin{proof}
	According to the definition of the Hilbert space inner product, we have
	$$
	\begin{aligned}
		&\left\langle f,\kappa(\cdot,\boldsymbol{z})-\sum\limits_{i= 1}^q p_i(\boldsymbol{z})\kappa(\cdot,\xi_i) 	\right\rangle_\mathcal{H} 
		= \left\langle \sum\limits_{j = 1}^{\infty}\beta_j \kappa(\cdot,x_j) ,\kappa(\cdot,\boldsymbol{z})-\sum\limits_{i= 1}^q p_i(\boldsymbol{z})\kappa(\cdot,\xi_i) \right\rangle_\mathcal{H} \\
		&= \sum\limits_{j = 1}^{\infty} \beta_j \kappa(\boldsymbol{z},x_j) - \sum\limits_{i = 1}^q \sum\limits_{j = 1}^{\infty} \beta_j p_i(\boldsymbol{z}) \kappa(\xi_i,x_j)
		= f(\boldsymbol{z}) - \sum\limits_{i = 1}^q \sum\limits_{j = 1}^{\infty} \beta_j p_i(\boldsymbol{z}) \kappa(\xi_i,x_j)\\
		&= f(\boldsymbol{z}) - \sum\limits_{i = 1}^q f(\xi_i) p_i(x),
	\end{aligned}
	$$
	then (\ref{pro_f_z}) holds.
\end{proof}

This decomposition separates the prediction function into two structurally interpretable components, i.e., the projection term that captures low-dimensional patterns through the specified basis functions $\{p_i\}_{i = 1}^q$ and the RKHS-orthogonal correction term enables capturing more latent information in the dataset based on the kernel's properties.
The projection term represents the prediction function's behavior constrained to the finite-dimensional subspace $\text{span} \{p_1,p_2,\dots, p_q\}$, while the correction term characterizes the correction required by the RKHS norm to maintain the GP's nonparametric flexibility.

\begin{lem}
	\label{lem_g}
	Let $\kappa(x, x_1), \kappa(x, x_2), \dots, \kappa(x, x_N)$ be a set of linearly independent functions, and let $p_1(x), \dots, p_q(x)$ be another set of linearly independent functions. Let $A = (a_{ij})$ be an invertible matrix, and define the functions $g_i(x)$ as:
	$$
	g_i(x) = \sum_{j=1}^N \sum_{l=1}^N \kappa(x, x_j) a_{jl} p_i(x), \quad i = 1, \dots, q,
	$$
	then, $g_1(x), g_2(x), \dots, g_q(x)$ are also linearly independent functions.

\end{lem}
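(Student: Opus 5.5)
The plan is to notice that the double sum in the definition of $g_i$ does not depend on $i$, so every $g_i$ is the same scalar function multiplied by $p_i$. Set
$$
h(x)=\sum_{j=1}^N\sum_{l=1}^N \kappa(x,x_j)a_{jl}=\sum_{j=1}^N c_j\,\kappa(x,x_j),\qquad c_j=\sum_{l=1}^N a_{jl},
$$
so that $g_i(x)=h(x)p_i(x)$ and $c=A\mathbf{1}$, where $\mathbf{1}=(1,\dots,1)^T$. The proof then has three steps.

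\textbf{Step 1: $h\neq 0$.} Since $A$ is invertible and $\mathbf{1}\neq 0$, the vector $c=A\mathbf{1}$ is nonzero. By the assumed linear independence of $\kappa(\cdot,x_1),\dots,\kappa(\cdot,x_N)$, a combination with a nonzero coefficient vector cannot vanish identically. Hence $h\not\equiv 0$.

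\textbf{Step 2: reduce to the $p_i$.} Suppose $\sum_{i=1}^q\lambda_i g_i\equiv 0$ on $\Omega$. This is the same as
$$
h(x)\,P_\lambda(x)=0 \quad\text{for all } x\in\Omega,\qquad P_\lambda=\sum_{i=1}^q\lambda_i p_i .
$$
So $P_\lambda$ vanishes on the set $U=\{x\in\Omega: h(x)\neq 0\}$. Because $\kappa$ is continuous, $h$ is continuous, and by Step 1 the set $U$ is a nonempty relatively open subset of $\Omega$.

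\textbf{Step 3: conclude $\lambda=0$ (main obstacle).} To finish I need $P_\lambda\equiv 0$ on all of $\Omega$. Then the linear independence of $p_1,\dots,p_q$ gives $\lambda=0$, which is the claim.

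Continuity of the $p_i\in C(\Omega)$ only extends $P_\lambda=0$ to the closure of $U$. It therefore suffices that $U$ be dense in $\Omega$, i.e.\ that the zero set of $h$ has empty interior. I would establish this under the regularity enjoyed by the kernels used in the paper. For instance, if $\kappa$ is real-analytic (Gaussian/RBF kernel) and $\Omega$ is connected with nonempty interior, then $h$ is a nonzero real-analytic function. Its zero set is then nowhere dense, so $U$ is dense, and continuity gives $P_\lambda\equiv 0$.

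Without such an assumption the statement can fail. If $h$ vanishes on an open set $V$, one can take $p_1,p_2$ that are linearly independent on $\Omega$ but proportional outside $V$, and then $h p_1$ and $h p_2$ are dependent. So in the write-up I would state this density/analyticity hypothesis explicitly as the condition under which the lemma holds. An alternative is to read linear independence of the $p_i$ as holding on every open subset of $\Omega$, in which case $P_\lambda|_U\equiv 0$ already forces $\lambda=0$ and no density argument is needed.
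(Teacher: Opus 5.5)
Your argument is correct for the formula exactly as printed. The double sum does not depend on $i$, $c=A\mathbf{1}\neq 0$ gives $h\not\equiv 0$, and with an analytic kernel on an open connected $\Omega$ the zero set of $h$ has empty interior, so continuity forces $\sum_i\lambda_i p_i\equiv 0$. Your counterexample is also right. A compactly supported positive definite kernel with $A=I$ makes $h=\sum_j\kappa(\cdot,x_j)$ vanish on an open set $V$. Then $p_1\equiv 1$ and $p_2=1+\phi$, with $\phi$ a nonzero continuous function supported in $V$, give $g_1=g_2$.

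However, this is not the lemma the paper proves or uses, because the printed $p_i(x)$ is evidently a typo for $p_i(x_l)$. In the paper's proof, and in the proof of Theorem~\ref{thm_GPR_nf} where the lemma is invoked with $A=K_{XX}^{-1}$, the functions are
\[
(g_1(x),\dots,g_q(x))=K_X(x)\,A\,P^T,\qquad P=(p_i(x_j))\in\mathbb{R}^{q\times N}.
\]
So each $g_i$ is a linear combination of the kernel translates (the kernel interpolant of $p_i$ on $X$), not a product $h\,p_i$. Your product and density argument says nothing about these functions, so it does not supply the fact the paper needs.

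For the intended $g_i$ the paper's proof is pure linear algebra. The identity $\sum_i c_i g_i=K_X(x)AP^TC\equiv 0$ forces $AP^TC=0$ by independence of the $\kappa(\cdot,x_j)$. Then $P^TC=0$ by invertibility of $A$, and the paper concludes $C=0$.

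Your instinct that an extra hypothesis is needed carries over, but in a different form. The last implication requires $\operatorname{rank}P=q$, i.e.\ the data set $X$ must be unisolvent for $\operatorname{span}\{p_1,\dots,p_q\}$. Linear independence of the $p_i$ on $\Omega$ does not guarantee this. For example, take $p_1\equiv 1$ and $p_2$ continuous and nonconstant with $p_2(x_j)=1$ for all $j$; then $P$ has two equal rows and $g_1=g_2$. In fact, under the intended reading $g_1,\dots,g_q$ are linearly independent if and only if $\operatorname{rank}P=q$. That full-rank condition, not analyticity of $\kappa$, is the hypothesis to add.
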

\begin{proof}
	Let $C = (c_1,c_2,\dots,c_q)^T$, such that
	\begin{equation}
		\label{cg}
		c_1 g_1(x)+c_2 g_2(x)+\cdots+c_q g_q(x) = 0.
	\end{equation}
	Let $P = (p_i(x_j))\in \mathbb{R}^{q\times N}$. Combining the expression of $g_i(x)$,  (\ref{cg}) can be rewritten as
	\begin{equation}
		\label{cg-1}
		K_X(x) K_{XX}^{-1} P^T C = 0. 
	\end{equation}
	Since $\kappa(x, x_1), \kappa(x, x_2), \dots, \kappa(x, x_N)$ are linearly independent, and $K_{XX}^{-1}$ is invertible, we can rewrite (\ref{cg-1}) as
	$$
	P^TC = 0. 
	$$
	Further, $p_1(x),p_2(x),\dots,p_q(x)$ are linearly independent, then $C = 0$, thus $g_1(x),g_2(x),\dots,g_q(x)$ are linearly independent functions.
\end{proof}

From Lemma \ref{lem_g}, we know that any invertible transformation maintains the linear independence of the combined function system formed by the kernel functions $\kappa(x,x_j), j = 1,\dots,N$ and the basis $p_i(x)$, $i = 1,2, \dots, q$.

\begin{thm}
	\label{thm_GPR_nf}
	Let $\Omega \subseteq \mathbb{R}^d$ be open. Suppose that $\kappa$ is a positive definite kernel on $\Omega$. Then for every $\boldsymbol{z}\in \Omega$ the error between true function $f\in \mathcal{N}_\mathcal{H} (\Omega)$ and its prediction $f^*$ from GPR can be bounded by 
	\begin{equation}
		\vert f(\boldsymbol{z})-f^*(\boldsymbol{z})\vert^2 \le C\cdot Var(f^*(\boldsymbol{z})),
	\end{equation}
	where $C$ is an nonnegative constants, and $Var(f^*(\boldsymbol{z}))$ is the variance of $f^*(\boldsymbol{z})$.
\end{thm}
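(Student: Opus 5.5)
We plan to apply Theorem \ref{thm_Xi} with the interpolation points taken to be the training inputs, i.e.\ $\Xi = X = \{x_1,\dots,x_N\}$ and $q=N$. The functions $p_i$ will be the Lagrange-type cardinal functions of the kernel interpolant, namely
\begin{equation*}
(p_1(\boldsymbol{z}),\dots,p_N(\boldsymbol{z})) = K_X(\boldsymbol{z})K_{XX}^{-1}.
\end{equation*}
Their linear independence follows from the linear independence of $\kappa(\cdot,x_1),\dots,\kappa(\cdot,x_N)$, which holds because $\kappa$ is positive definite and the $x_i$ are distinct. It is also a special case of Lemma \ref{lem_g} with $A=K_{XX}^{-1}$, so these $p_i$ are admissible in Theorem \ref{thm_Xi}. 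With this choice the projection term becomes $\sum_i f(x_i)p_i(\boldsymbol{z}) = K_X(\boldsymbol{z})K_{XX}^{-1}Y = f^*(\boldsymbol{z})$. Hence (\ref{pro_f_z}) gives the exact error representation
\begin{equation*}
f(\boldsymbol{z})-f^*(\boldsymbol{z}) = \Big\langle f,\ \kappa(\cdot,\boldsymbol{z})-\sum_{i=1}^N p_i(\boldsymbol{z})\kappa(\cdot,x_i)\Big\rangle_{\mathcal{H}}.
\end{equation*}

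Next, we will apply the Cauchy--Schwarz inequality in $\mathcal{H}$:
\begin{equation*}
|f(\boldsymbol{z})-f^*(\boldsymbol{z})|^2 \le \|f\|_{\mathcal{H}}^2\,\Big\|\kappa(\cdot,\boldsymbol{z})-\sum_{i=1}^N p_i(\boldsymbol{z})\kappa(\cdot,x_i)\Big\|_{\mathcal{H}}^2 .
\end{equation*}
The second factor can be expanded with the reproducing property, $\langle\kappa(\cdot,x),\kappa(\cdot,y)\rangle_{\mathcal{H}}=\kappa(x,y)$. Writing $p(\boldsymbol{z})=K_{XX}^{-1}K_X(\boldsymbol{z})^T$, the expansion gives
\begin{equation*}
\kappa(\boldsymbol{z},\boldsymbol{z}) - 2K_X(\boldsymbol{z})p(\boldsymbol{z}) + p(\boldsymbol{z})^TK_{XX}\,p(\boldsymbol{z}).
\end{equation*}
Substituting $p(\boldsymbol{z})$ collapses this to $\kappa(\boldsymbol{z},\boldsymbol{z})-K_X(\boldsymbol{z})K_{XX}^{-1}K_X(\boldsymbol{z})^T$. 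That is exactly the posterior variance (\ref{eq:variance}), which the statement denotes $Var(f^*(\boldsymbol{z}))$; it is the power function squared. The bound then holds with $C=\|f\|_{\mathcal{H}}^2\ge 0$, which is finite because $f\in\mathcal{N}_\mathcal{H}(\Omega)$.

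The main obstacle is bookkeeping rather than anything conceptual, and it has two parts. First, Theorem \ref{thm_Xi} is stated for $f$ given as a series in kernel translates. For a general element of the native space we will use the reproducing property directly: $\langle f,\kappa(\cdot,\boldsymbol{z})\rangle_{\mathcal{H}} = f(\boldsymbol{z})$ and $\langle f,\kappa(\cdot,x_i)\rangle_{\mathcal{H}}=f(x_i)$. This makes the decomposition valid for all $f\in\mathcal{H}$ by linearity alone, with no density argument needed. Second, we must make sure that ``$Var(f^*(\boldsymbol{z}))$'' is read as the conditional variance (\ref{eq:variance}), and we will state this explicitly. The quadratic-form simplification is then the only computation, and it is routine, since $K_{XX}^{-1}$ is symmetric.
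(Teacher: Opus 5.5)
Your proposal is correct and follows the paper's argument in substance. Both use Theorem \ref{thm_Xi} to reach $f(\boldsymbol{z})-f^*(\boldsymbol{z})=\langle f,\kappa(\cdot,\boldsymbol{z})-\sum_{j}u_j^*(\boldsymbol{z})\kappa(\cdot,x_j)\rangle_{\mathcal{H}}$, then apply Cauchy--Schwarz and the reproducing property to get $C=\Vert f\Vert_{\mathcal{H}}^2$ times $\kappa(\boldsymbol{z},\boldsymbol{z})-K_X(\boldsymbol{z})K_{XX}^{-1}K_X(\boldsymbol{z})^T$. The paper keeps $\Xi$ and the $p_i$ arbitrary and needs the auxiliary $g_i$ (with Lemma \ref{lem_g}) to cancel the projection terms, whereas your choice $\Xi=X$, $p_i=u_i^*$ is the case $P=I$, $g_i=p_i$ of that construction, so the projection term is $f^*(\boldsymbol{z})$ at once. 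That, plus your reproducing-property argument removing the series assumption of Theorem \ref{thm_Xi}, makes your version the cleaner one.
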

\begin{proof}
	Let $p_1(x), p_2(x),\dots,p_q(x)$ be a set of linearly independent functions, $\mathcal{P}$ is a space spanned by $\{p_1,p_2,\dots,p_q\}$, and choose a subset $\Xi = \{\xi_1,\xi_2,\dots,\xi_q \} \subseteq \Omega$ . Assume $\dim(\mathcal{P})=q \le N$, and let $\{p_i(\boldsymbol{z})\}_{i = 1}^q$ is a basis of $\mathcal{P}$.
	Then, according to Theorem \ref{thm_Xi}, $f(\boldsymbol{z})$ can be rewritten as 
	\begin{equation}
		\label{f_z}
		f(\boldsymbol{z}) = \sum\limits_{i = 1}^q f(\xi_i) p_i(\boldsymbol{z}) +\left\langle f,\kappa(\cdot,\boldsymbol{z})-\sum\limits_{i= 1}^q p_i(\boldsymbol{z})\kappa(\cdot,\xi_i) \right\rangle_\mathcal{H}.
	\end{equation}

	Let $g_i(x) :=\sum\limits_{j =1}^N\sum\limits_{k=1}^N \kappa(x,x_j)a_{jk} p_i(x_k)   = \sum\limits_{j=1}^N \kappa(x,x_j) g_i(x_j)$, where $g_i(x_j) = \sum\limits_{k = 1}^N a_{jk}p_i(x_k)$. 
	Let $U^*(\boldsymbol{z}) =  K_X(\boldsymbol{z}) K_{XX}^{-1}$, then $\bar{f^*}(\boldsymbol{z})  = U^*(\boldsymbol{z})Y$. {  There is}

	$$
	\begin{aligned}
		f^*(\boldsymbol{z}) &= U^*(\boldsymbol{z})Y =\sum\limits_{j = 1}^N u_j^*(\boldsymbol{z})f(x_j)\\
		&=\sum\limits_{j = 1}^N u_j^*(\boldsymbol{z}) \left(\sum\limits_{i = 1}^q f(\xi_i) p_i(x_j) +\left\langle f,\kappa(\cdot,x_j)-\sum\limits_{i= 1}^q p_i(x_j)\kappa(\cdot,\xi_i) \right\rangle_\mathcal{H} \right)\\
		&=\sum\limits_{i = 1}^q \sum\limits_{j = 1}^N u_j^*(\boldsymbol{z}) f(\xi_i) p_i(x_j)+ \left\langle f, \sum\limits_{j = 1}^N u_j^*(\boldsymbol{z})\kappa(\cdot,x_j)- \sum\limits_{i= 1}^q\sum\limits_{j = 1}^N u_j^*(\boldsymbol{z})p_i(x_j)\kappa(\cdot,\xi_i) \right\rangle_\mathcal{H}.
	\end{aligned}
	$$
	
	Since $U^*(\boldsymbol{z}) =\begin{bmatrix}u_1^*(\boldsymbol{z}) & u_2^*(\boldsymbol{z}) &\cdots &u_N^*(\boldsymbol{z}) \end{bmatrix}= K_X (\boldsymbol{z}) (K_{XX})^{-1}$,  $\sum\limits_{i = 1}^q \sum\limits_{j = 1}^N u_j^*(\boldsymbol{z}) f(\xi_i) p_i(x_j) $ can be rewritten as $ K_X(\boldsymbol{z}) K_{XX}^{-1} P^T Y_{\xi}$, where $P = (p_i(x_j))\in \mathbb{R}^{q\times N}$, $Y_{\xi} =(f(\xi_1),f(\xi_2), \dots, f(\xi_q) )^T $. Let $(g_1(\boldsymbol{z}),\dots,g_q(\boldsymbol{z})) =K_X(\boldsymbol{z}) K_{XX}^{-1} P^T $, according to Lemma \ref{lem_g}, $g_1(\boldsymbol{z}), g_2(\boldsymbol{z}),\dots,g_q(\boldsymbol{z})$ are linearly independent. Obviously, we have 
	$$
	\sum\limits_{i = 1}^q \sum\limits_{j = 1}^N u_j^*(\boldsymbol{z})  p_i(\boldsymbol{x}_j) = \sum\limits_{i = 1}^q  g_i(\boldsymbol{z} ) .
	$$ 
	
	Then $f^*(\boldsymbol{z})$ can be rewritten as 
	\begin{equation}
		f^*(\boldsymbol{z}) = \sum\limits_{i = 1}^q  g_i(\boldsymbol{z}) f(\xi_i) +\left\langle f, \sum\limits_{j = 1}^N u_j^*(\boldsymbol{z})\kappa(\cdot,\boldsymbol{x}_j)- \sum\limits_{i= 1}^q p_i(\boldsymbol{z})\kappa(\cdot,\xi_i)\right\rangle_\mathcal{H}. 
	\end{equation}
	
	In the other hand, the functions $g_1(x), g_2(x),\dots,g_q(x)$ are also linearly independent, then 
	\begin{equation}
		f(\boldsymbol{z}) = \sum\limits_{i = 1}^q f(\xi_i) g_i(\boldsymbol{z}) +\left\langle f,\kappa(\cdot,\boldsymbol{z})-\sum\limits_{i= 1}^q g_i(\boldsymbol{z})\kappa(\cdot,\xi_i) \right\rangle_\mathcal{H}.
	\end{equation}
	
	Thus 
	$$
	\begin{aligned}
		&\vert f(\boldsymbol{z}) - f^*(\boldsymbol{z})\vert^2 =\left\vert \left \langle f,\kappa(\cdot,\boldsymbol{z}) - \sum\limits_{j = 1}^N u_j^*(\boldsymbol{z})\kappa(\cdot,\boldsymbol{x}_j) \right \rangle_\mathcal{H} \right\vert^2 \\
		&\le \Vert f\Vert_\mathcal{H}^2 \cdot \left\vert \left \langle \kappa(\cdot,\boldsymbol{z}) - \sum\limits_{j = 1}^N u_j^*(\boldsymbol{z})\kappa(\cdot,\boldsymbol{x}_j), \kappa(\cdot,\boldsymbol{z}) - \sum\limits_{j = 1}^N u_j^*(\boldsymbol{z})\kappa(\cdot,\boldsymbol{x}_j) \right \rangle_\mathcal{H} \right\vert\\
		&= \Vert f\Vert_\mathcal{H}^2 \cdot \left(\kappa(\boldsymbol{z},\boldsymbol{z})- K_X(\boldsymbol{z}) K_{XX}^{-1} K_X(\boldsymbol{z})\right) = C\cdot Var(f^*(\boldsymbol{z})), 
	\end{aligned}
	$$
	where $C =\Vert f\Vert_\mathcal{H}^2$. 
\end{proof}
This theorem admits a straightforward extension to the noisy observation case.

\begin{thm}
	Let $\Omega \subseteq \mathbb{R}^d$ be open. Suppose that $\kappa$ is a positive definite kernel on $\Omega$. Consider the noisy observations $y_i = f(x_i)+\epsilon_i$, where $\epsilon_i\sim\mathcal{N}(0,\sigma_n^2)$ is observational noise with mean zero and variance $\sigma_n^2$.
	Then for every $\boldsymbol{z}\in \Omega$, the predicted function obtained by GPR $f_{noise}^*(\boldsymbol{z})$  and  its variance are given by 
	\[f_{noise}^*(\boldsymbol{z}) = K_X(\boldsymbol{z}) (K_{XX}+\sigma_n^2 I)^{-1} Y,
	\]
	\[
	Var(f_{noise}^*(\boldsymbol{z})) = \kappa(\boldsymbol{z},\boldsymbol{z})-K_X(\boldsymbol{z}) (K_{XX}+\sigma_n^2 I)^{-1} K_X(\boldsymbol{z}).
	\]
	Then the error between true function $f\in \mathcal{N}_\mathcal{H} (\Omega)$ and its prediction $f_{noise}^*$ from GPR can be bounded by 
	\begin{equation}
		\vert f(\boldsymbol{z})-f_{noise}^*(\boldsymbol{z})\vert^2 \le C_{noise}\cdot Var(f_{noise}^*(\boldsymbol{z})),
	\end{equation}
	where $C_{noise}$ {  is an nonnegative constant.}
\end{thm}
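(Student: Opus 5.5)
The plan follows the proof of Theorem \ref{thm_GPR_nf} but works in the augmented kernel $\tilde\kappa(x,x') = \kappa(x,x') + \sigma_n^2\delta_{x x'}$, or equivalently handles the noise term directly. Write $u(\boldsymbol{z}) = (K_{XX}+\sigma_n^2 I)^{-1}K_X(\boldsymbol{z})^T \in \mathbb{R}^N$, so that $f^*_{noise}(\boldsymbol{z}) = \sum_j u_j f(x_j) + \sum_j u_j\epsilon_j$. The first two formulas in the statement are the standard Gaussian conditioning identities (\ref{eq:mean})--(\ref{eq:variance}) applied with $\mathrm{Cov}(Y) = K_{XX}+\sigma_n^2 I$. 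They need only the remark that $K_{XX}+\sigma_n^2 I$ is invertible, which holds because it is positive definite.

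For the error, I will use the reproducing property (Theorem \ref{thm_Xi}, with the projection part cancelling exactly as in Theorem \ref{thm_GPR_nf}) to write
\[
f(\boldsymbol{z}) - f^*_{noise}(\boldsymbol{z}) = \Big\langle f,\ \kappa(\cdot,\boldsymbol{z}) - \sum_{j=1}^N u_j \kappa(\cdot,x_j)\Big\rangle_\mathcal{H} - \sum_{j=1}^N u_j\epsilon_j .
\]
Consider the Hilbert space $\mathcal{H}\oplus\mathbb{R}^N$ with inner product $\langle (h,a),(h',a')\rangle = \langle h,h'\rangle_\mathcal{H} + \sigma_n^{-2}a^Ta'$. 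The error is then the inner product of $(f,-\epsilon)$ with $(\kappa(\cdot,\boldsymbol{z}) - \sum_j u_j\kappa(\cdot,x_j),\ \sigma_n^2 u)$. Applying Cauchy--Schwarz gives
\[
|f(\boldsymbol{z}) - f^*_{noise}(\boldsymbol{z})|^2 \le \big(\|f\|_\mathcal{H}^2 + \sigma_n^{-2}\|\epsilon\|^2\big)\, Q(\boldsymbol{z}),
\]
where
\[
Q = \kappa(\boldsymbol{z},\boldsymbol{z}) - 2u^TK_X(\boldsymbol{z})^T + u^TK_{XX}u + \sigma_n^2 u^Tu .
\]

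The key computation is to show that $Q$ equals the stated variance. Since $(K_{XX}+\sigma_n^2 I)u = K_X(\boldsymbol{z})^T$, we have
\[
u^T(K_{XX}+\sigma_n^2 I)u = u^TK_X(\boldsymbol{z})^T,
\]
and therefore
\[
Q = \kappa(\boldsymbol{z},\boldsymbol{z}) - K_X(\boldsymbol{z})(K_{XX}+\sigma_n^2 I)^{-1}K_X(\boldsymbol{z})^T = Var(f^*_{noise}(\boldsymbol{z})).
\]
This gives the bound with $C_{noise} = \|f\|_\mathcal{H}^2 + \sigma_n^{-2}\|\epsilon\|_2^2$, which is nonnegative and independent of $\boldsymbol{z}$.

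The main obstacle is interpretive. $C_{noise}$ depends on the realized noise vector $\epsilon$, so it is a deterministic constant only once the data are fixed. I will state it that way. I will also note that, since $\|\epsilon\|^2/\sigma_n^2\sim\chi^2_N$, a high-probability version follows: with probability at least $1-\delta$, one can take $C_{noise} = \|f\|_\mathcal{H}^2 + N + 2\sqrt{N\log(1/\delta)} + 2\log(1/\delta)$ by the Laurent--Massart bound. This last step is an optional strengthening and does not affect the main argument.
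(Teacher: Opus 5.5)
Your proof is correct. The paper gives no separate argument here; it only remarks that the theorem is a straightforward extension of Theorem \ref{thm_GPR_nf}.

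Your proposal keeps that proof's skeleton: the reproducing property writes the error as an inner product, Cauchy--Schwarz bounds it, and the representer's squared norm is identified with the posterior variance. You drop the $p_i$/$\xi_i$ projection terms, which cancel anyway. You also add the one ingredient a literal transcription of the noise-free proof would lack.

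With $u=(K_{XX}+\sigma_n^2 I)^{-1}K_X(\boldsymbol{z})^T$, Cauchy--Schwarz in $\mathcal{H}$ alone controls only $\langle f,\kappa(\cdot,\boldsymbol{z})-\sum_j u_j\kappa(\cdot,x_j)\rangle_\mathcal{H}$. That representer has squared norm $Var(f^*_{noise}(\boldsymbol{z}))-\sigma_n^2u^Tu$, not the noisy variance. It also says nothing about the extra term $-u^T\epsilon$. Your $\sigma_n^{-2}$-weighted space $\mathcal{H}\oplus\mathbb{R}^N$ absorbs both at once. The noisy variance then appears exactly, mirroring the worst-case-error reading of the posterior variance, and you get the explicit constant $C_{noise}=\|f\|_\mathcal{H}^2+\sigma_n^{-2}\|\epsilon\|_2^2$.

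This also pins down what the statement leaves vague. The variance does not depend on $\epsilon$, while $-u^T\epsilon$ is unbounded in $\epsilon$ (take $\boldsymbol{z}=x_1$, where $K_X(x_1)\neq 0$ and hence $u\neq 0$). So no $\epsilon$-independent constant can hold almost surely. $C_{noise}$ must depend on the realization, or hold only with high probability as in your $\chi^2_N$ remark.

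The only caveat is that your weighting requires $\sigma_n>0$. The case $\sigma_n=0$ is Theorem \ref{thm_GPR_nf} itself.
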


\section{Weighted Jacobi Iteration with GPR}
\label{WJI-GPR}
Consider a linear systems of equations of the form
\begin{equation}
	\label{Ax=b}
	Ax = b,
\end{equation}
where $A \in \mathbb{C}^{N\times N}$ is the coefficient matrix, $x\in \mathbb{C}^{N\times 1}$ is the unknown vector and $b\in \mathbb{C}^{N\times 1}$ is the right-hand side vector. Here $\mathbb{C}$ denotes the set of complex numbers. The matrix $A$ can be split as
$$
A = L+D, 
$$
where $D=\text{diag}(A)$ { is the diagonal part} and $L = A-D$ contains the off-diagonal entries.

The weighted Jacobi iteration(WJI) \cite{Y-2003} for solving equation (\ref{Ax=b}) is defined by the following update:
$$
x_{t+1} = x_t+\omega D^{-1}(b-Ax_t), 
$$
where $t$ is $t$-th iteration. The iteration converges if and only if $\rho(I-\omega D^{-1}A)<1$.  Denote the optimal parameter as $\omega_{opt}$ and $\lambda_i, i=1, \dots, n$ are eigenvalues of $D^{-1}A$. If $A$ is symmetric and all eigenvalues of $D^{-1}A$ are real,  $\lambda_{\max}$ and $\lambda_{\min}$ have the same sign, there is 

\begin{equation}
	\rho(I-\omega D^{-1}A) = \max\limits_{i}\left\vert 1-\omega \lambda_i  \right\vert = \max\left\{ \left\vert 1-\omega\lambda_{\min} \right\vert, \left\vert 1-\omega\lambda_{\max} \right\vert \right\}. 
\end{equation}

When $\lambda_{\max}$ has a different sign from $\lambda_{\min}$, one can apply a shift to ensure all eigenvalues have the same sign.
When $\left\vert 1-w\lambda_{\min} \right\vert = \left\vert 1-w\lambda_{\max} \right\vert$, $w$ is taken as the optimal value, i.e. $w = w_{opt}$, where
$$
\omega_{opt} = \frac{2}{\lambda_{\max}+\lambda_{\min}}. 
$$

Since the exact values of $\lambda_{\max}$ and $\lambda_{\min}$ are generally unavailable, determining the optimal relaxation parameter for the weighted Jacobi iteration becomes challenging. 
To address this, we employ GPR to estimate a quasi-optimal parameter, which is then utilized in subsequent iterations. The { WJI based on GPR} is outlined in Algorithm \ref{WJI_GPR}.

\begin{algorithm}
	\caption{The WJI based on GPR}
	\label{alg1}
	\label{WJI_GPR}
	\begin{algorithmic}[1]
		\Require $A$, $D$, $l$, $x_0$ with $\Vert x_0 \Vert_2 = 1$ and $\omega$ obtained by GPR;
		\Ensure $x_{l}$;
		\For {$t = 0,1,\dots,l-1$}
		\State $x_{t+1} = x_t+\omega D^{-1}(b-Ax_t)$;
		\EndFor
	\end{algorithmic}
\end{algorithm}

Assume $\omega^*$ is obtained by GPR, and $f^*$ is the corresponding prediction function. According to Theorem \ref{thm_GPR_nf}, there is 
\begin{equation}
	\vert \omega^*-\omega_{opt}\vert \le C_\omega \cdot Var(f^*), 
\end{equation}
where $C_\omega>0$ is a constant. Then convergence analysis of the weighted Jacobi iteration with $\omega^*$ is given as follows. 

\begin{thm}
	\label{GPR_Jacobi_convergence}
	Suppose that $A\in \mathbb{R}^{n\times n}$ is symmetric and let $\lambda_{\min}$ and $\lambda_{\max}$ denote the minimum and maximum eigenvalues of $D^{-1}A$, respectively. If $\lambda_{\min}$ and $\lambda_{\max}$ have the same sign, then the weighted Jacobi iteration with GPR predicted parameter $\omega^*$ converges if and only if 
	\begin{equation}
		\label{convergence_1}
		C_\omega \cdot Var(f^*) \le \frac{2\lambda_{\min}}{\lambda_{\max}(\lambda_{\max}+\lambda_{\min})}, ~ \lambda_{\max}>\lambda_{\min}>0,
	\end{equation}
	\begin{equation}
		\label{convergence_2}
		C_\omega \cdot Var(f^*)\le -\frac{2\lambda_{\max}}{\lambda_{\min}(\lambda_{\max}+\lambda_{\min})}, ~ 0>\lambda_{\max}>\lambda_{\min}.
	\end{equation}
\end{thm}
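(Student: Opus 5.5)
The plan is to reduce convergence to a condition on the interval in which $\omega^*$ may lie. As stated before Algorithm \ref{WJI_GPR}, the weighted Jacobi iteration converges if and only if $\rho(I-\omega D^{-1}A)<1$. Since $A$ is symmetric and the eigenvalues of $D^{-1}A$ are real with the same sign, this spectral radius equals $\max\{|1-\omega\lambda_{\min}|,|1-\omega\lambda_{\max}|\}$. I will first find the set of admissible $\omega$ explicitly. I will then check that the bound $|\omega^*-\omega_{opt}|\le C_\omega\cdot Var(f^*)$, which comes from Theorem \ref{thm_GPR_nf}, places $\omega^*$ inside that set exactly when (\ref{convergence_1}) or (\ref{convergence_2}) holds.

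\emph{Case $\lambda_{\max}>\lambda_{\min}>0$.} Here $|1-\omega\lambda_i|<1$ for all $i$ is equivalent to $0<\omega<2/\lambda_{\max}$. Write $\omega^*=\omega_{opt}+\delta$ with $|\delta|\le C_\omega\cdot Var(f^*)$ and $\omega_{opt}=2/(\lambda_{\max}+\lambda_{\min})$.
\begin{itemize}
\item The distance from $\omega_{opt}$ to the right endpoint is
$$\frac{2}{\lambda_{\max}}-\frac{2}{\lambda_{\max}+\lambda_{\min}}=\frac{2\lambda_{\min}}{\lambda_{\max}(\lambda_{\max}+\lambda_{\min})}.$$
\item The distance to the left endpoint $0$ is $2/(\lambda_{\max}+\lambda_{\min})$. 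This is strictly larger, because $\lambda_{\min}/\lambda_{\max}<1$.
\end{itemize}
So the binding constraint is the right endpoint, and every $\omega^*$ with $|\omega^*-\omega_{opt}|$ below that distance gives convergence. This yields (\ref{convergence_1}).

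\emph{Case $0>\lambda_{\max}>\lambda_{\min}$.} All eigenvalues are negative, so the admissible interval is $2/\lambda_{\min}<\omega<0$, and $\omega_{opt}<0$ lies inside it.
\begin{itemize}
\item The distance to the left endpoint is
$$\frac{2}{\lambda_{\max}+\lambda_{\min}}-\frac{2}{\lambda_{\min}}=-\frac{2\lambda_{\max}}{\lambda_{\min}(\lambda_{\max}+\lambda_{\min})}>0.$$
\item The distance to $0$ is $-2/(\lambda_{\max}+\lambda_{\min})$. This is larger, since $\lambda_{\max}/\lambda_{\min}\in(0,1)$.
\end{itemize}
This gives (\ref{convergence_2}).

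The main obstacle is making the ``if and only if'' precise. Theorem \ref{thm_GPR_nf} only confines $\omega^*$ to a ball around $\omega_{opt}$; it does not fix $\omega^*$ itself. I would therefore state the equivalence in the worst-case sense: the iteration converges for every $\omega^*$ consistent with the bound if and only if the radius $C_\omega\cdot Var(f^*)$ does not exceed the distance to the nearer endpoint. For necessity, if the radius exceeds that distance, I will exhibit an admissible $\omega^*$ beyond the endpoint, for which $\rho\ge1$.

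The boundary case also needs care. When equality holds in (\ref{convergence_1}) or (\ref{convergence_2}), the extreme choice of $\omega^*$ gives $\rho=1$. I would handle this either by reading the inequalities as strict, or by noting that the GPR error bound is attained only in degenerate situations, and I would record this caveat explicitly in the write-up.
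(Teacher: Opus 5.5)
Your argument is correct, but it reaches the thresholds by a different route than the paper. You compute the exact convergence set ($0<\omega<2/\lambda_{\max}$, resp.\ $2/\lambda_{\min}<\omega<0$) and measure the distance from $\omega_{opt}$ to its nearer endpoint. The paper never identifies this interval. It splits $\omega^*=\omega_{opt}+(\omega^*-\omega_{opt})$ and applies the triangle inequality at each extreme eigenvalue, obtaining $\rho(I-\omega^*D^{-1}A)\le\bigl|\frac{\lambda_{\max}-\lambda_{\min}}{\lambda_{\max}+\lambda_{\min}}\bigr|+C_\omega\cdot Var(f^*)\max\{|\lambda_{\min}|,|\lambda_{\max}|\}$. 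It then asks that the right-hand side be below $1$. In the positive case $1-\frac{\lambda_{\max}-\lambda_{\min}}{\lambda_{\max}+\lambda_{\min}}=\frac{2\lambda_{\min}}{\lambda_{\max}+\lambda_{\min}}$, so dividing by $\lambda_{\max}$ reproduces your distance to $2/\lambda_{\max}$; the negative case is symmetric. The two pictures agree because the triangle inequality is sharp when $\omega^*$ moves toward the nearer endpoint.

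The paper's route yields a quantitative spectral-radius estimate, which its subsequent remark on the convergence rate relies on. Yours yields an exact characterization, which is what exposes the ``only if'' direction and the boundary case. Your caveats there are justified, and the paper does not address them:
\begin{itemize}
\item it proves only sufficiency;
\item it writes the spectral-radius bound as an equality, although that holds only as a supremum over all $\omega^*$ compatible with the error bound;
\item under the non-strict hypotheses its two terms sum to exactly $1$, not to something $<1$.
\end{itemize}
So the statement needs the worst-case reading with strict inequalities. Of your two proposed fixes, only the first works. In the worst-case reading the endpoint value of $\omega^*$ is admissible by definition, so it cannot be excluded by appealing to the error bound being attained only in degenerate situations.
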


\begin{proof}
	The convergence of the weighted Jacobi iteration is equivalent to the spectral radius condition
	\begin{equation}
		\label{convergence_eq1}
		\rho(I-\omega^* D^{-1}A)  <1.
	\end{equation}
	Since $A$ is symmetric matrix, there is
	$$\rho(I-\omega^* D^{-1}A) = \max\limits_{i}\left\vert 1-\omega^* \lambda_i  \right\vert = \max\left\{ \left\vert 1-\omega^*\lambda_{\min} \right\vert, \left\vert 1-\omega^*\lambda_{\max} \right\vert \right\}. $$
	For the term $\left\vert 1-\omega^*\lambda_{\min} \right\vert$, there has
	\begin{align*}
		\left\vert 1-w^*\lambda_{\min}  \right\vert & = \left\vert 1-(\omega^*-\omega_{opt}+\omega_{opt})\lambda_{\min} \right\vert \\
		& \le \left\vert \frac{\lambda_{\max}-\lambda_{\min}}{\lambda_{\max}+\lambda_{\min}}\right\vert + C_\omega \cdot Var(f^*)\left\vert \lambda_{\min} \right\vert. 
	\end{align*}
	
	It is similar to get $\left\vert 1-\omega^*\lambda_{\max}  \right\vert \le \left\vert \frac{\lambda_{\max}-\lambda_{\min}}{\lambda_{\max}+\lambda_{\min}} \right\vert + C_\omega \cdot Var(f^*)\left\vert \lambda_{\min} \right\vert$. Then we can rewritten $\rho(I-\omega^* D^{-1} A)$ as follows:
	\begin{equation}
		\label{The2-2_rho}
		\rho(I-\omega^* D^{-1} A) = \left\vert \frac{\lambda_{\max}-\lambda_{\min}}{\lambda_{\max}+\lambda_{\min}}\right\vert+C_\omega \cdot Var(f^*) \max\{\vert \lambda_{\min}\vert, \vert \lambda_{\max}\vert\}
	\end{equation} 
	
	If $\lambda_{\max}>\lambda_{\min}>0$, there are $\left\vert \frac{\lambda_{\max}-\lambda_{\min}}{\lambda_{\max}+\lambda_{\min}} \right\vert = \frac{\lambda_{\max}-\lambda_{\min}}{\lambda_{\max}+\lambda_{\min}}$ and $\max\{\vert \lambda_{\min}\vert, \vert \lambda_{\max}\vert\} = \lambda_{\max} $. By substituting (\ref{convergence_1}) into (\ref{The2-2_rho}), we have
	\begin{align*}
		&\rho(I-\omega^* D^{-1} A)  = \frac{\lambda_{\max}-\lambda_{\min}}{\lambda_{\max}+\lambda_{\min}}+C_\omega \cdot Var(f^*) \lambda_{\max}\\
		&<\frac{\lambda_{\max}-\lambda_{\min}}{\lambda_{\max}+\lambda_{\min}}+\frac{2\lambda_{\min}\lambda_{\max}}{\lambda_{\max}(\lambda_{\max}+\lambda_{\min})}<1.
	\end{align*}
	
	If $0>\lambda_{\max}>\lambda_{\min}$, there are $\left\vert\frac{\lambda_{\max}-\lambda_{\min}}{\lambda_{\max}+\lambda_{\min}} \right\vert = -\frac{\lambda_{\max}-\lambda_{\min}}{\lambda_{\max}+\lambda_{\min}}$ and $\max\{\vert \lambda_{\min}\vert, \vert \lambda_{\max}\vert\} =-\lambda_{\min} $.  By substituting (\ref{convergence_2}) into (\ref{The2-2_rho}), we have
	\begin{align*}
		&\rho(I-\omega^* D^{-1} A)  = -\frac{\lambda_{\max}-\lambda_{\min}}{\lambda_{\max}+\lambda_{\min}}+C_\omega \cdot Var(f^*)(- \lambda_{\min})\\
		&<-\frac{\lambda_{\max}-\lambda_{\min}}{\lambda_{\max}+\lambda_{\min}}+\frac{2\lambda_{\min}\lambda_{\max}}{\lambda_{\min}(\lambda_{\max}+\lambda_{\min})}<1.
	\end{align*}
\end{proof}

\begin{rk}
	We provide some remarks on Theorem \ref{GPR_Jacobi_convergence} as follows:
	\begin{itemize}
		\item When \( Var(f^*) = 0 \), it follows that \( \omega^* = \omega_{opt} \), and the spectral radius achieves its minimum value \( \rho(I - \omega^* D^{-1} A) = \left| \frac{\lambda_{\max} - \lambda_{\min}}{\lambda_{\max} + \lambda_{\min}} \right| \), yielding the optimal convergence rate \( R_{opt} = -\ln\left( \left| \frac{\lambda_{\max} - \lambda_{\min}}{\lambda_{\max} + \lambda_{\min}} \right| \right) \).
		\item The convergence rate \( R = -\ln(\rho(I - \omega^* D^{-1} A)) \) increases as \( Var(f^*) \) decreases. Specifically, from the proof of Theorem \ref{GPR_Jacobi_convergence}, \( \rho(I - \omega^* D^{-1} A) \leq \left| \frac{\lambda_{\max} - \lambda_{\min}}{\lambda_{\max} + \lambda_{\min}} \right| + C_\omega Var(f^*) \max \{ |\lambda_{\min}|, |\lambda_{\max}| \} \). A smaller \( Var(f^*) \) leads to a smaller spectral radius, thereby accelerates convergence. To minimize \( Var(f^*) \),  the kernel function \( \kappa \) in the GPR model can be optimized, as indicated by (\ref{eq:variance}).
	\end{itemize}
\end{rk}

Furthermore, we derive a convergence condition on the approximation error $\Vert f-f^*\Vert_{L^\infty(\Omega)}$. It is obviously that 
\[\vert \omega^*-\omega_{opt}\vert \le  \Vert f-f^*\Vert_{L^\infty(\Omega)}. \]
Following a similar proof approach as in Theorem \ref{GPR_Jacobi_convergence}, we can obtain the Lemma \ref{GPR_Jacobi_convergence2}. This result can be directly combined with existing error bounds for GPR, thereby enabling a more comprehensive analysis of the GPR  for the weighted Jacobi iteration.

\begin{lem}
	\label{GPR_Jacobi_convergence2}
	Suppose that $A\in \mathbb{R}^{n\times n}$ is symmetric and $\lambda_{\min}$ and $\lambda_{\max}$ have the same sign. Then the weighted Jacobi iteration based on GPR predicted parameter $\omega^*$ converges if and only if 
	\begin{equation}
		\label{convergence_2_1}
		\Vert f-f^*\Vert_{L^\infty(\Omega)} \le \frac{2\lambda_{\min}}{\lambda_{\max}(\lambda_{\max}+\lambda_{\min})}, ~ \lambda_{\max}>\lambda_{\min}>0,
	\end{equation}
	\begin{equation}
		\label{convergence_2_2}
		\Vert f-f^*\Vert_{L^\infty(\Omega)} \le -\frac{2\lambda_{\max}}{\lambda_{\min}(\lambda_{\max}+\lambda_{\min})}, ~ 0>\lambda_{\max}>\lambda_{\min}.
	\end{equation}
\end{lem}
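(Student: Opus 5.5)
The plan is to avoid the lossy triangle-inequality estimate (\ref{The2-2_rho}). Instead I will compute the exact set of relaxation parameters for which the iteration converges and measure its distance from $\omega_{opt}$. The admissible radius around $\omega_{opt}$ then turns out to be exactly the right-hand side of (\ref{convergence_2_1}), respectively (\ref{convergence_2_2}). That gives both directions of the equivalence, once ``converges'' is read as ``converges for every predicted $\omega^*$ compatible with the given sup-error''.

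\textbf{Step 1: the exact convergence interval.} Since $A$ is symmetric, the eigenvalues $\lambda_i$ of $D^{-1}A$ are real, and as in the proof of Theorem \ref{GPR_Jacobi_convergence} we have $\rho(I-\omega D^{-1}A)=\max\{|1-\omega\lambda_{\min}|,|1-\omega\lambda_{\max}|\}$. This is $<1$ iff $0<\omega\lambda_{\min}<2$ and $0<\omega\lambda_{\max}<2$.
\begin{itemize}
\item If $\lambda_{\max}>\lambda_{\min}>0$, this is the interval $\omega\in(0,2/\lambda_{\max})$.
\item If $0>\lambda_{\max}>\lambda_{\min}$, it is $\omega\in(2/\lambda_{\min},0)$.
\end{itemize}

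\textbf{Step 2: distance from $\omega_{opt}=2/(\lambda_{\max}+\lambda_{\min})$ to the endpoints.}
\begin{itemize}
\item In the positive case, the right endpoint gives
\[
\frac{2}{\lambda_{\max}}-\frac{2}{\lambda_{\max}+\lambda_{\min}}=\frac{2\lambda_{\min}}{\lambda_{\max}(\lambda_{\max}+\lambda_{\min})}=:\delta.
\]
The left endpoint $0$ is farther away, at distance $2/(\lambda_{\max}+\lambda_{\min})>\delta$. So $\delta$ is exactly the radius of the largest interval centred at $\omega_{opt}$ contained in the convergence region.
\item In the negative case, the same computation gives
\[
\frac{2}{\lambda_{\max}+\lambda_{\min}}-\frac{2}{\lambda_{\min}}=-\frac{2\lambda_{\max}}{\lambda_{\min}(\lambda_{\max}+\lambda_{\min})}.
\]
This matches (\ref{convergence_2_2}), and again it is the binding side.
\end{itemize}

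\textbf{Step 3: the two directions.} Write $E=\Vert f-f^*\Vert_{L^\infty(\Omega)}$.
\begin{itemize}
\item \emph{Sufficiency.} From $|\omega^*-\omega_{opt}|\le E$ and Step 2, $E$ below the threshold forces $\omega^*$ into the open convergence interval, so $\rho<1$.
\item \emph{Necessity.} If $E$ exceeds the threshold, then the parameter $\omega^*=\omega_{opt}+E$ (respectively $\omega_{opt}-E$ in the negative case) is compatible with the error bound. It lies outside the convergence interval, so $\rho\ge1$. Hence convergence cannot be guaranteed from the error bound alone.
\end{itemize}

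\textbf{Main obstacle.} The hard part is making the ``only if'' meaningful. For a single fixed $\omega^*$, the sup-norm error can be large while $\omega^*$ still happens to land in the convergence interval. So necessity must be stated in the worst-case sense over all predictors with that $L^\infty$ error, and I will state that interpretation explicitly.

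The boundary case also needs care. If $E$ equals the threshold and $\omega^*=2/\lambda_{\max}$, then $\rho=1$. So with the non-strict inequality as written, sufficiency holds only when $\omega^*$ does not sit at the endpoint. I would therefore treat the equality case separately, or replace $\le$ by $<$, noting that this is the sharp form.
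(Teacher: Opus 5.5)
Your argument is correct, including the two caveats you flag, both of which the statement genuinely needs. It takes a different route from the paper. The paper gives no separate proof: it says the lemma follows ``by a similar approach'' to Theorem \ref{GPR_Jacobi_convergence}. That is, substitute $E=\Vert f-f^*\Vert_{L^\infty(\Omega)}$ for $C_\omega\cdot Var(f^*)$ in the triangle-inequality bound (\ref{The2-2_rho}), and check that the threshold keeps $\rho_{opt}+E\max\{|\lambda_{\min}|,|\lambda_{\max}|\}$ at most $1$, where $\rho_{opt}=\left|\frac{\lambda_{\max}-\lambda_{\min}}{\lambda_{\max}+\lambda_{\min}}\right|$. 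You instead compute the exact convergence interval and the distance from $\omega_{opt}$ to its nearer endpoint.

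Your stated motivation is slightly off, because (\ref{The2-2_rho}) is not lossy in the sense that matters. In the positive case, $1-\omega\lambda_{\max}$ is already negative at $\omega_{opt}$ and decreases with slope $\lambda_{\max}$ as $\omega$ grows. So the right-hand side of (\ref{The2-2_rho}) equals $\max_{|\omega-\omega_{opt}|\le E}\rho(I-\omega D^{-1}A)$ exactly, attained at $\omega_{opt}+E$ (at $\omega_{opt}-E$ in the negative case). Read in your worst-case sense, the paper's route therefore gives the same sharp threshold and the converse too, although the paper never observes this and argues sufficiency only.

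What each route buys:
\begin{itemize}
\item The paper's route gives a quantitative rate bound, linear in $E$, which the subsequent remark uses.
\item Your route gives a transparent picture of the admissible set and an explicit converse. It also gives the honest boundary treatment: at the threshold the paper's chain yields $\frac{\lambda_{\max}-\lambda_{\min}}{\lambda_{\max}+\lambda_{\min}}+\frac{2\lambda_{\min}}{\lambda_{\max}+\lambda_{\min}}=1$ exactly, so its written ``$<1$'' requires the strict inequality you propose.
\end{itemize}

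One small correction: symmetry of $A$ alone does not make the spectrum of $D^{-1}A$ real. For $A=\begin{pmatrix}1&2\\2&-1\end{pmatrix}$ the eigenvalues are $1\pm 2i$. Realness does follow when $D$ is positive definite, via similarity to $D^{-1/2}AD^{-1/2}$. Otherwise it has to be read into the hypothesis, as the paper implicitly does.
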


To quantify the parameter predicted by GPR under both epistemic and computational misspecification, we leverage the $L^\infty$ interpolation error bound established in \cite{D-R-2025} to our setting.

For $s < \frac{d}{2}$, the Sobolev space $H^s(\mathbb{R}^d)$ is defined as
\[
H^s(\mathbb{R}^d) = \left\{ f \in L^2(\mathbb{R}^d) : \|f\|_{H^s(\mathbb{R}^d)}^2 = \int_{\mathbb{R}^d} (1 + |\xi|^2)^s |\hat{f}(\xi)|^2 \, d\xi < \infty \right\},
\]
with $\hat{f}$ the Fourier transform of $f$. For a bounded domain $\mathcal{D} \subset \mathbb{R}^d$, define
\[
H^s(\mathcal{D}) = \left\{ f \in L^2(\mathcal{D}) : \exists F \in H^s(\mathbb{R}^d),\ F|_{\mathcal{D}} = f \right\}, \quad
\|f\|_{H^s(\mathcal{D})} = \inf \left\{ \|F\|_{H^s(\mathbb{R}^d)} : F|_{\mathcal{D}} = f \right\}.
\]

Let $h_{n,\Omega}:= \sup\limits_{x\in\Omega}{\rm{dist}}(x,X_n)$ is fill distance.
Let $\kappa: \mathcal{D} \times \mathcal{D} \to \mathbb{R}$ be a covariance function whose RKHS $\mathcal{H}_\kappa$ satisfies $\mathcal{H}_\kappa \subset H^{s_n}(\mathcal{D})$ for some $s_n > \frac{d}{2}$ with \emph{equivalent norms}: there exists $C_N > 1$ such that
\[
C_N^{-1} \|g\|_{\mathcal{H}_\kappa} \leq \|g\|_{H^{s_n}(\mathcal{D})} \leq C_N \|g\|_{\mathcal{H}_\kappa}, \quad \forall g \in \mathcal{H}_\kappa.
\]
Let $K_\kappa = (\kappa(x_i, x_j))_{i,j=1}^n \in \mathbb{R}^{n \times n}$ be invertible, and let $\lambda \geq 0$ be the nugget parameter.
The following theorem combines this general misspecification analysis with the convergence criterion of the weighted Jacobi method.

\begin{thm}
	\label{thm:mis_convergence}
	Let $A \in \mathbb{R}^{n \times n}$ be symmetric with diagonal part $D$, and let $\lambda_{\min}, \lambda_{\max} > 0$ be the smallest and largest eigenvalues of $D^{-1}A$. Let $f: \Omega \to \mathbb{R}$ be the true optimal relaxation parameter function, observed at points $X_n = \{x_1,\dots,x_n\} \subset \Omega$ with quasi-uniform fill distance $h_{n,\Omega} \lesssim n^{-1/d}$. Let $\kappa: \Omega \times \Omega \to \mathbb{R}$ be a misspecified covariance function whose RKHS $\mathcal{H}_\kappa$ satisfies $\mathcal{H}_\kappa \subset H^{s_N}(\Omega)$ $(s_N > d/2)$ with equivalent norms: there exists $C_N > 1$ such that
	\[
	C_N^{-1} \|g\|_{\mathcal{H}_\kappa} \leq \|g\|_{H^{s_N}(\Omega)} \leq C_N \|g\|_{\mathcal{H}_\kappa}, \quad \forall g \in \mathcal{H}_\kappa.
	\]
	Let $K_\kappa = (\kappa(x_i,x_j))_{i,j=1}^n$ be invertible, and let $\eta \geq 0$ be the nugget parameter. The GPR predicted function is denoted by $f^*(x)$. When using the weighted Jacobi iteration, convergence is guaranteed under the following sufficient conditions:
	
	\vspace{0.5em}
	\noindent\textbf{Part I}.  
	Suppose $f \in H^{s_0}(\Omega)$ with $s_0 > d/2$, and there exists $f_N \in \mathcal{H}_\kappa$ such that $f_N|_{X_n} = f|_{X_n}$. There exists $h_{\lfloor s_0 \rfloor, \lfloor s_N \rfloor, d, \Omega} > 0$ such that if $h_{n,\Omega} \leq h_{\lfloor s_0 \rfloor, \lfloor s_N \rfloor, d, \Omega}$, then
	\[\begin{aligned}
		\|f - f^*\|_{L^\infty(\Omega)} 
		&\leq C_{s_0,s_N,d,\Omega} C_N^2 \Big[(h_{n,\Omega}^{s_0} + h_{n,\Omega}^{d/2} \sqrt{\eta})(\|f\|_{H^{s_0}(\Omega)} + \|f_N\|_{H^{s_0}(\Omega)}) + (h_{n,\Omega}^{s_N} + h_{n,\Omega}^{d/2} \sqrt{\eta})\|f_N\|_{H^{s_N}(\Omega)}\Big] \\
		&\le \min\left\{ \frac{2\lambda_{\min}}{\lambda_{\max}(\lambda_{\min} + \lambda_{\max})}, \frac{2\lambda_{\max}}{\lambda_{\min}(\lambda_{\min} + \lambda_{\max})} \right\}.
	\end{aligned}\]
	If $\eta = 0$, the constant $C_{s_0,s_N,d,\Omega}$ can be replaced by the improved value $C_{\lfloor s_0 \rfloor, \lfloor s_N \rfloor, d, \Omega}$.
	
	\vspace{0.5em}
	\noindent\textbf{Part II}.  
	Suppose $f \in L^\infty(\Omega)$ has well-defined pointwise values. There exists $h_{\lfloor s_N \rfloor, d, \Omega} > 0$ such that if $h_{n,\Omega} \leq h_{\lfloor s_N \rfloor, d, \Omega}$, then for any $f_N \in \mathcal{H}_\kappa$,
	\[\begin{aligned}
		\|f - f^*\|_{L^\infty(\Omega)} 
		&\leq C_{s_N,d,\Omega} C_N \sqrt{n} \left[ \frac{h_{n,\Omega}^{s_N - d/2}}{\sqrt{\sigma_{\min}(K_\kappa) + \eta}} + 1 \right] \|f_N - f\|_{L^\infty(\Omega)} + C_{s_N,d,\Omega} C_N^2 (h_{n,\Omega}^{s_N - d/2} + \sqrt{\eta})\|f_N\|_{H^{s_N}(\Omega)}\\
		&\le \min\left\{ \frac{2\lambda_{\min}}{\lambda_{\max}(\lambda_{\min} + \lambda_{\max})}, \frac{2\lambda_{\max}}{\lambda_{\min}(\lambda_{\min} + \lambda_{\max})} \right\}.
	\end{aligned}
	\]
	If $\eta = 0$, the constant $C_{s_N,d,\Omega}$ can be replaced by $C_{\lfloor s_N \rfloor, d, \Omega}$.

\end{thm}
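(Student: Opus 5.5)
The argument has two parts. First, import the two $L^\infty$ interpolation error bounds for misspecified kernels from \cite{D-R-2025}, translated into the present notation. Second, chain each bound with the sufficient convergence criterion of Lemma \ref{GPR_Jacobi_convergence2}. In short, Lemma \ref{GPR_Jacobi_convergence2} says that once $\Vert f-f^*\Vert_{L^\infty(\Omega)}$ is below a spectral threshold, the iteration with $\omega^*$ converges. The theorem only asserts that each explicit upper bound, if it is itself below that threshold, certifies convergence.

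\textbf{Step 1 (threshold).} Since $\vert\omega^*-\omega_{opt}\vert \le \Vert f-f^*\Vert_{L^\infty(\Omega)}$, it suffices to have $\rho(I-\omega^*D^{-1}A)<1$ whenever $\vert\omega^*-\omega_{opt}\vert$ is at most the threshold. I would argue this directly rather than through the bound (\ref{The2-2_rho}). Because $\lambda_{\min},\lambda_{\max}>0$, convergence holds exactly when $0<\omega^*<2/\lambda_{\max}$. Also $\omega_{opt}=2/(\lambda_{\max}+\lambda_{\min})$, so the distance from $\omega_{opt}$ to the right endpoint is
\[
\frac{2}{\lambda_{\max}}-\frac{2}{\lambda_{\max}+\lambda_{\min}}=\frac{2\lambda_{\min}}{\lambda_{\max}(\lambda_{\max}+\lambda_{\min})},
\]
and the distance to the left endpoint is $\omega_{opt}$ itself, which is larger. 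Taking the minimum of the two expressions in the statement is at most the first one, so any deviation no larger than that minimum keeps $\omega^*$ within the convergence interval. The one caveat is the endpoint case of equality, which needs strict inequality. I would either note that equality at $2/\lambda_{\max}$ gives $\rho=1$ and read the condition as strict, or absorb this as the paper does in Lemma \ref{GPR_Jacobi_convergence2}. The second entry of the minimum belongs to the negative-eigenvalue case and is only redundant here.

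\textbf{Step 2 (Part I).} Apply the first theorem of \cite{D-R-2025}, the case $f\in H^{s_0}$ with an $\mathcal{H}_\kappa$-interpolant $f_N$ that agrees with $f$ on $X_n$. Its hypotheses are: a bounded Lipschitz domain satisfying an interior cone condition (implicit in the constants $C_{s_0,s_N,d,\Omega}$), $s_0,s_N>d/2$, norm equivalence with constant $C_N$, fill distance below $h_{\lfloor s_0\rfloor,\lfloor s_N\rfloor,d,\Omega}$, and the nugget $\eta$. I would check that these match the statement and identify the GP posterior mean with nugget $\eta$ with their regularized interpolant $f^*=K_X(\cdot)(K_\kappa+\eta I)^{-1}Y$. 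The cited theorem then gives the first inequality verbatim, including the improved constant when $\eta=0$. Combining it with Step 1 gives convergence.

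\textbf{Step 3 (Part II).} Proceed the same way using the second theorem of \cite{D-R-2025}, which covers $f\in L^\infty$ with arbitrary $f_N\in\mathcal{H}_\kappa$. Its proof splits $f-f^*$ as $(f-f_N)+(f_N-f_N^*)+(f_N^*-f^*)$. The last term is bounded by $\sqrt n$ times the norm of $K_X(\cdot)(K_\kappa+\eta I)^{-1}$ times $\Vert f_N-f\Vert_\infty$, and this is where $\sigma_{\min}(K_\kappa)+\eta$ enters. The middle term is controlled by a sampling inequality. I only need to quote the result, then chain it with Step 1.

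\textbf{Main obstacle.} I expect the main difficulty to be verifying that the hypotheses and the estimator in \cite{D-R-2025} coincide exactly with the present setting: the domain regularity, the identification of the posterior mean with the nugget, and the meaning of the constants. The spectral part is elementary.
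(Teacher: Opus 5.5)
Your proposal is correct and matches the paper's argument, which the paper leaves implicit: quote the two $L^\infty$ misspecification bounds of \cite{D-R-2025} and chain each with the threshold of Lemma~\ref{GPR_Jacobi_convergence2}, whose minimum here is $\frac{2\lambda_{\min}}{\lambda_{\max}(\lambda_{\max}+\lambda_{\min})}$. Deriving that threshold from the convergence interval $0<\omega^*<2/\lambda_{\max}$ gives the same value as the paper's triangle-inequality bound on $\rho(I-\omega^*D^{-1}A)$. Your remark that equality must be excluded is right: at equality $\rho=1$, a boundary case the paper's strict-inequality chain glosses over.
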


{ 
\subsection{Complexity analysis}

The computational complexity of all proposed methods is considered, which mainly includes the parameter prediction by GPR and the weighted Jacobi iteration.

The parameter prediction using GPR involves computations for both training data and prediction data. Assume the training dataset for GPR consists of $M$ samples, which satisfies $M \ll N$, where $N$ is the size of the linear system to be solved.
The training process requires matrix inversion, which has a time complexity of  $\mathcal{O}(M^3)$ ~\cite{C-C-2006}. When predicting the optimal parameter $\omega^*$, it has a time complexity of $\mathcal{O}(M^2)$ due to the need to compute the kernel vector and its product with the inverse covariance matrix. Hence, the overall complexity of the GPR-based parameter prediction is $\mathcal{O}(M^3 + M^2)$, which is dominated by the training cost $\mathcal{O}(M^3)$.

Each iteration of the weighted Jacobi iteration mainly involves computing the matrix-vector product $Ax$.
Consequently, the time complexity of one WJI iteration is $\mathcal{O}(N^2)$~\cite{Y-2003}.
If the algorithm requires $k$ iterations to converge, the total complexity of the iterative process is $\mathcal{O}(kN^2)$.

Therefore, the overall computational complexity of the WJI based on GPR is 
$\mathcal{O}(M^3 + kN^2)$.

}
\section{Numerical Results}
\label{Experiments}

In this section, we apply both the { WJI} based on GPR and the Jacobi iteration (JI) to solve the linear system $Ax = b$. 
We demonstrate that the former outperforms the latter in terms of both the number of iteration steps (denoted as  ``Iter") and computational time in seconds (denoted as ``CPU"). { The CPU time represents the total time for parameter prediction and iterative solving. Since the same type of PDEs can reuse the same training set for all matrix sizes, the time spent generating the training set is not taken into account.}

In Example \ref{exm:2d-laplace-neumann} $\sim$ Example \ref{exm:3d-strong_convection_diffusion}, the tests are started from the initial vector is $\frac{e_1}{\Vert e_1\Vert_2}$, where $e_1 = (1,1,\dots,1)^T$, and terminated once the current relative residual, defined by
$$
\text{RRES} = \frac{\Vert r_k\Vert_2}{\Vert r_0\Vert_2},
$$
satisfies $\text{RRES} < 10^{-6}$, or once the number of iteration steps exceeds 500000, where $r_k =D^{-1} (b-Ax_k)$.

All experiments are carried out using MATLAB (version R2022a) on a 
on a Mac laptop with Apple M1 core and 8G memory.

We determine the relatively quasi-optimal $\omega$ by traversing method for small-scale matrices and then apply the GPR method to predict it optimal value. 
The small-scale training set is selected with $n \in [5,50]$ where the samples are spaced by an interval of 5.
The quasi-optimal parameter $w$ is obtained by traversing the interval $[0.5,~2]$, with a step size of 0.001. 

In the process of GPR, { we utilize three different kernel functions to capture the varying potential behaviors of optimal parameters across different matrix scales. We choose the Gaussian kernel $\mathcal{K}_1$ as it is one of the most commonly used kernel functions, and the Periodic kernel $\mathcal{K}_2$ is selected to capture any potential periodicity in the optimal parameter values. Additionally, we include an additive kernel $\mathcal{K}_3$, which is the sum of the Gaussian and Periodic kernels, to leverage the strengths of both kernels and provide a more flexible modeling approach. Specifically, these kernels are defined as follows:

$$
\mathcal{K}_1(x_i,x_j) = \sigma_f^2 \frac{\exp(-\Vert x_i-x_j\Vert_2^2)}{2\sigma_l^2},~~\mathcal{K}_2(x_i,x_j) = \sigma_f^2  \exp \left( \frac{-2\sin^2(\frac{ \pi \Vert x_i-x_j\Vert_2 }{p})}{\sigma_l^2} \right), 
$$
$$
\mathcal{K}_3(x_i,x_j) = \mathcal{K}_1(x_i,x_j)+\mathcal{K}_2(x_i,x_j), 
$$
where $\sigma_f$, $\sigma_l$, $p$ are hyperparameters.

The hyperparameters in the kernel function are selected by maximizing the log-likelihood function:

\begin{equation}
\log p(Y|X, \boldsymbol{\theta}) = -\frac{1}{2} Y^T (K_{\boldsymbol{\theta}} + \sigma_n^2 I)^{-1} Y - \frac{1}{2}\log|K_{\boldsymbol{\theta}} + \sigma_n^2 I| - \frac{n}{2}\log 2\pi,
\end{equation}
where $\boldsymbol{\theta}$ represents the hyperparameters of the kernel function, $K_{\boldsymbol{\theta}}$ is the kernel matrix computed using the training data and the kernel function with $\boldsymbol{\theta}$.

{  In the analysis of numerical results, "WJI+Gaussian" denotes parameter prediction for the WJI using GPR with the Gaussian kernel; similarly for "WJI+Periodic" and "WJI+additive kernel".}
}

\begin{exm}
	\label{exm:2d-laplace-neumann}
	Consider the two-dimensional Laplace equation with Neumann boundary conditions:
	\begin{equation}
		\label{eq:2d-laplace}
		-\left( \frac{\partial^2 u}{\partial x^2} + \frac{\partial^2 u}{\partial y^2} \right) = 0, \quad (x, y) \in [0, L] \times [0, L],
	\end{equation}
	with Neumann boundary conditions:
	\[
	\frac{\partial u}{\partial n} = 0 \quad \text{on } \partial \Omega,
	\]
	where \( \partial \Omega \) is the boundary of the domain \( \Omega = [0, L] \times [0, L] \).
\end{exm}

We use the centered difference method to discretize the Laplace equation (\ref{eq:2d-laplace}) and obtain the linear system \( A u = b \). The coefficient matrix \( A \) is a sparse matrix of size \( (N_x-2)(N_y-2) \times (N_x-2)(N_y-2) \), where \( N_x \) and \( N_y \) are the number of grid points in the \( x \) and \( y \) directions, respectively. The matrix \( A \) is defined as:
\[
A = D_x \otimes I + I \otimes D_y,
\]
where   \( D_x \) and \( D_y \) are tridiagonal matrices representing the discretization of the second derivatives in the \( x \) and \( y \) directions, respectively, \( I \) is the identity matrix, \( \otimes \) denotes the Kronecker product.
The right-hand side vector \( b \) is defined as $b = (0,0,\dots,0)$, where the elements of \( b \) are determined by the Neumann boundary conditions.

Table \ref{Tab:2d_Laplace_Neumann} shows Iter and the CPU time of WJI with three different kernels and JI. The WJI with three different kernels is significantly superior to JI in terms of Iter and CPU time. 
{ 
As the matrix size $n$ increases, the iteration count of JI grows substantially, whereas the WJI based on GPR maintains an almost constant Iter of about 3000 across all tested sizes, leading to significantly lower CPU time. These results demonstrate that parameter predicted by GPR  enhance the convergence performance of the WJI.
}

Figure \ref{Figure:2d_laplace_Neumann} displays the curves of Iter versus $n$, $\log_{10} \text{RRES}$ versus iteration steps, which can be seen in Figure \ref{Figure:2d_laplace_Neumann} (a) and (b). {  From these, it is clear that using the parameter predicted by GPR can significantly accelerate the convergence rate of the WJI. Figure \ref{Figure:2d_laplace_Neumann} (c) illustrates the predicted $\omega$ values versus grid size $n$. Here we select 10 training data points and use them to predict $\omega$ for larger values of $n$. It can be observed that the $\omega$ values predicted by the three kernel functions eventually stabilize around 1.065. In fact, while more training data generally leads to more accurate predictions, 10 points turn out to be sufficient for this problem.}  Figure \ref{Figure:2d_laplace_Neumann} (d) shows the curves of predicted variance of three kernels at different matrix sizes $n$ and demonstrates that all three kernel functions maintain prediction variances below 0.4, indicating that our parameter predictions approximate the optimal parameters with satisfactory accuracy.
While periodic kernels achieved the lowest prediction variance among the tested kernels, other kernels demonstrated superior performance in terms of Iter and CPU times. This phenomenon can be attributed to the influence of $\Vert f\Vert_\mathcal{H}$ on the effectiveness of optimal parameter prediction.


\begin{table}[htbp!]
	\caption{Iter and CPU of the JI and WJI based on GPR for { 2D Laplace equation with Neumann boundary conditions}}
	\label{Tab:2d_Laplace_Neumann}
	\centering
	\begin{tabular}{ccccccccc}
		\hline
		\multirow{2}{*}{Iteration method} &     & \multicolumn{7}{c}{$n$}           \\
		\cline{3-9} 
		&     & 60                   & 70   & 80    & 90    & 100   & 110   & 120 \\ 
		\hline
		\multirow{2}{*}{JI}           & Iter   & 5824 &  9503  & 13057 & 16159 & 21165 &  24586 &25166 \\
		\cline{2-9} 
		& CPU & 0.26634  & 0.54209  &  0.99525  &  1.6187  & 4.8451 &  6.7412 &   7.9442 \\  \hline
		\multirow{2}{*}{  WJI+Gaussian}         & Iter  &  2430  & 2630 & 2519 &  1656 & 2537 &  3585 & 2901 \\ 
		\cline{2-9} 
		& CPU & 0.1539 &   0.1052 &  0.23903 & 0.25884 & 0.76803 &   1.3255 & 1.1907 \\ 
		\hline
		\multirow{2}{*}{  WJI+Periodic}         & Iter  &  2332 &   2674 & 2524 & 1674 & 2554 & 3585 &2873 \\ 
		\cline{2-9} 
		& CPU &  0.12054 &  0.1881 & 0.24059 &  0.25146 & 0.77264 &  1.2685 & 1.1814 \\ 
		\hline
		\multirow{2}{*}{  WJI+additive kernel}         & Iter  & 2428 &  2622 & 2527 &  1645  & 2556 & 3581 &  2895 \\ 
		\cline{2-9} 
		& CPU &   0.12495 & 0.1795        &   0.23928 &  0.20878  & 0.78836 & 1.2706 & 1.1933 \\ 
		\hline
	\end{tabular}
\end{table}

\begin{figure}[h!]
	\begin{minipage}[t]{0.45\textwidth}
		\centering
		\includegraphics[width=1\textwidth]{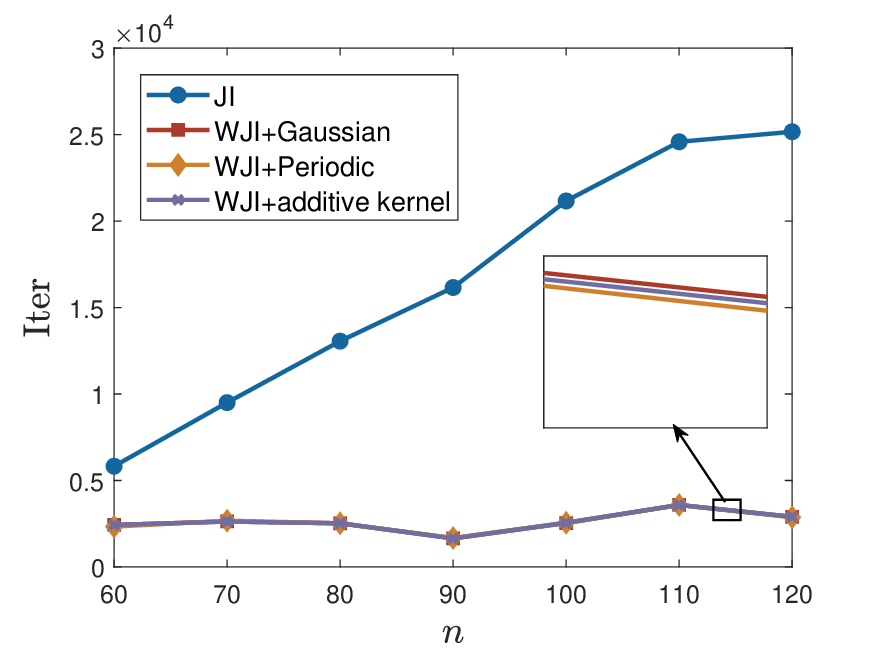}
		\caption*{(a) {  Iteration versus $n$}}
	\end{minipage}
	\begin{minipage}[t]{0.45\textwidth}
		\centering
		\includegraphics[width=1\textwidth]{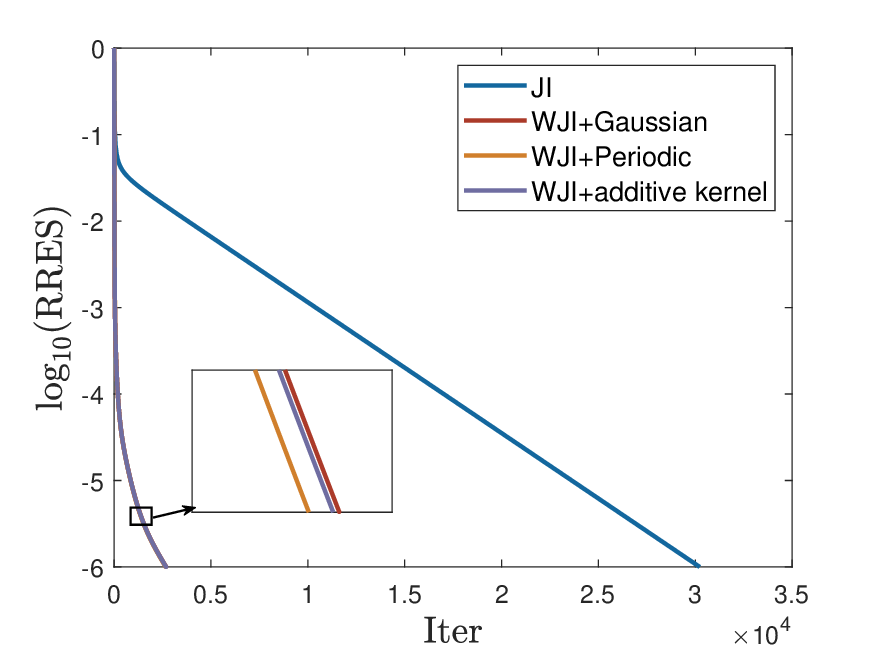}
		\caption*{(b) {  $\log_{10}\text{RRES}$ versus iteration}}
	\end{minipage}
	
	\begin{minipage}[t]{0.45\textwidth}
		\centering
		\includegraphics[width=1\textwidth]{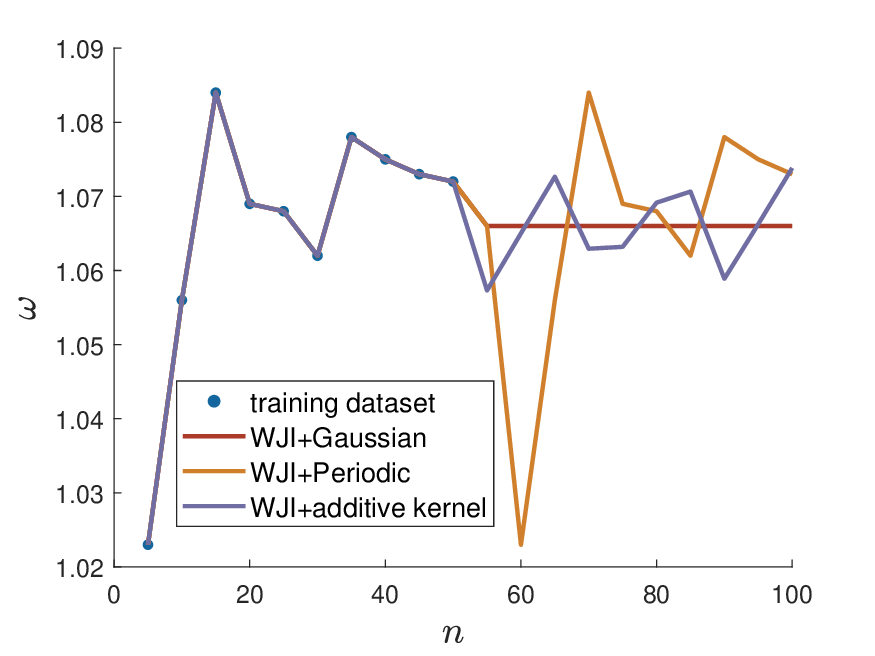}
		\caption*{(c) {  $n$ versus $\omega$}}
	\end{minipage}
	\begin{minipage}[t]{0.45\textwidth}
		\centering
		\includegraphics[width=1\textwidth]{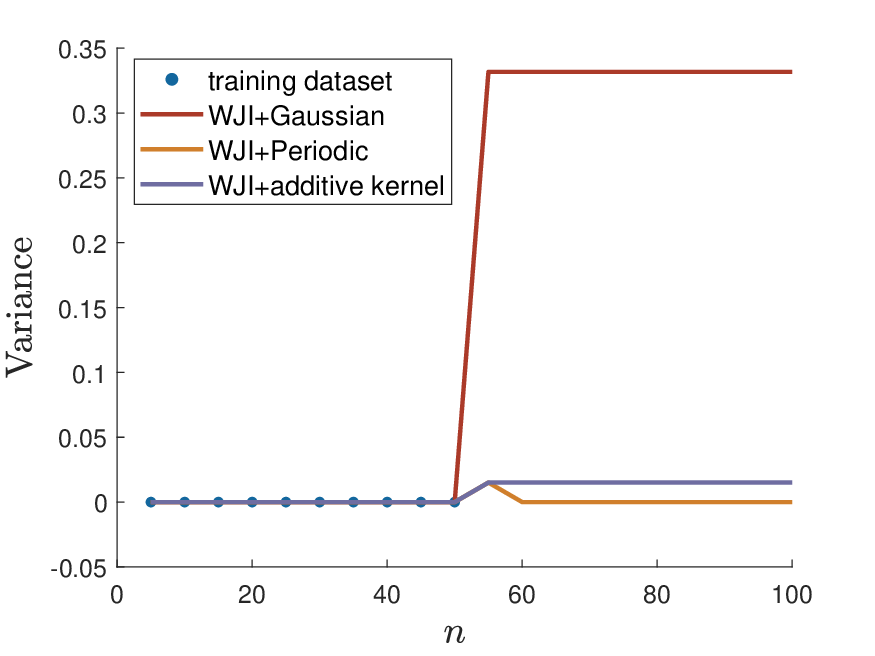}
		\caption*{(d) {  Variance versus $n$}}
	\end{minipage}
	
	\caption{{ Numerical results for the 2D Laplace equation with Neumann boundary conditions. (a): Curve of the number of iterations versus $n$; (b):Convergence curve of RRES when $n = 120$; (c): Values of $\omega$ for different $n$;(d): Variance Curves for different $n$.}}
	\label{Figure:2d_laplace_Neumann}
\end{figure}

\begin{exm}[\cite{J-S-Z-2022}]
	\label{exm:3d-convection_diffusion}
	Consider the three-dimensional convection-diffusion equation with Dirichlet boundary condition: 
	\begin{equation}
		\label{eq:3d-convection_diffusion}
		-(\frac{\partial^2 u}{\partial x^2} + \frac{\partial^2 u}{\partial y^2}+\frac{\partial^2 u}{\partial z^2})+(\frac{\partial u}{\partial x}+\frac{\partial u}{\partial y}+\frac{\partial u}{\partial z})  = f(x,y,z), 
	\end{equation}
	on the unit cube $\Omega = [0,1]\times [0,1]\times [0,1]$.
\end{exm}

We use the centered difference method to discretize the convective-diffusion (\ref{eq:3d-convection_diffusion}) and obtain the linear system $Ax = b$. The coefficient matrix is 
$$
A = T_x \otimes I \otimes I + I \otimes T_y \otimes I + I \otimes I \otimes T_z
$$
where $T_x$, $T_y$ and $T_z$ are tridiagonal matrices. $T_x = \text{tridiag}(t_2,t_1,t_3)$, $T_y = T_z =  \text{tridiag}(t_2,0,t_3)$, $t_1 = 6$, $t_2 = -1-r$, $t_3 = -1+r$, $r = \frac{1}{2n+2}$, $n$ is the degree
of freedom along each dimension. We choose $n = 60,~70,~80,~90,~100,~110,~120$. $x\in \mathbb{R}^{n^3}$ is the unknown vector of discretizing $u(x,y,z)$. $b \in \mathbb{R}^{n^3}$ is the discretization vector of $f(x,y,z)$ which is determined by choosing the exact solution $x_e = (1,1,\dots,1)^T$.

The numerical results of Example \ref{exm:3d-convection_diffusion} are listed in Table \ref{Tab:3d_convection_diffusion}. It can be seen that the WJI based on GPR significantly improves Iter and CPU time compared to JI, particularly for ($n \geq 100$). However, when $n = 60$, {  the WJI based on GPR} requires fewer iterations than JI but incurs slightly higher CPU time. This arises because the WJI introduces a scalar multiplication in its update step. This additional operation increases the per-iteration computational cost compared to JI. For smaller matrices $ n = 60$ , the reduction in iterations is insufficient to offset this overhead, leading to higher CPU time.
Figure \ref{Figure:3d_convection_diffusion} illustrates the convergence, variance prediction, and variance variation curves, visually confirming {  the rationality of the GPR-predicted parameter. Moreover, the results demonstrate that this approach successfully accelerates the convergence of WJI. We observe that as $n$ increases, the $\omega$ values predicted by the three kernel functions stabilize between 1.02 and 1.025, and their acceleration effects in WJI are comparable.}

\begin{table}[htbp!]
	\caption{Iter and CPU of the JI and WJI based on GPR for {  3D convection-diffusion equation with Dirichlet boundary condition} }
	\label{Tab:3d_convection_diffusion}
	\centering
	\begin{tabular}{ccccccccc}
		\hline
		\multirow{2}{*}{Iteration method} &     & \multicolumn{7}{c}{$n$}           \\
		\cline{3-9} 
		&     & 60    & 70   & 80 & 90  & 100 & 110 & 120 \\ 
		\hline
		\multirow{2}{*}{JI}           & Iter  &  6189 & 8353  & 10250 & 9165 &15734 & 19809 &  22056\\
		\cline{2-9} 
		& CPU & 44.058   & 112.50    &   188.43   & 233.77  &  547.15 &907.42 & 1291.5 \\  \hline
		\multirow{2}{*}{  WJI+Gaussian}         & Iter  & 4668   & 6005  & 7456   & 9008 &  10680 &12402 &14232 \\ 
		\cline{2-9} 
		& CPU &  48.872 &  90.389 & 189.46 &  280.37  & 435.43 &678.15 & 950.92 \\ 
		\hline
		\multirow{2}{*}{  WJI+Periodic}         & Iter  &  4705   &  5934 &  7441  &  8991  & 10659 &12402 &14344 \\ 
		\cline{2-9} 
		& CPU & 50.606 & 86.599 & 159.74 & 291.21 & 435.30 & 699.51& 936.85 \\ 
		\hline
		\multirow{2}{*}{  WJI+additive kernel}         & Iter  & 4676  &  6014  & 7467  &  9022 & 10696 & 12421 &14253 \\ 
		\cline{2-9} 
		& CPU & 50.804 & 86.027 & 194.57 & 243.09 & 417.94 & 642.49 & 933.05 \\ 
		\hline
	\end{tabular}
\end{table}

\begin{figure}[h!]
	\begin{minipage}[t]{0.43\textwidth}
		\centering
		\includegraphics[width=1\textwidth]{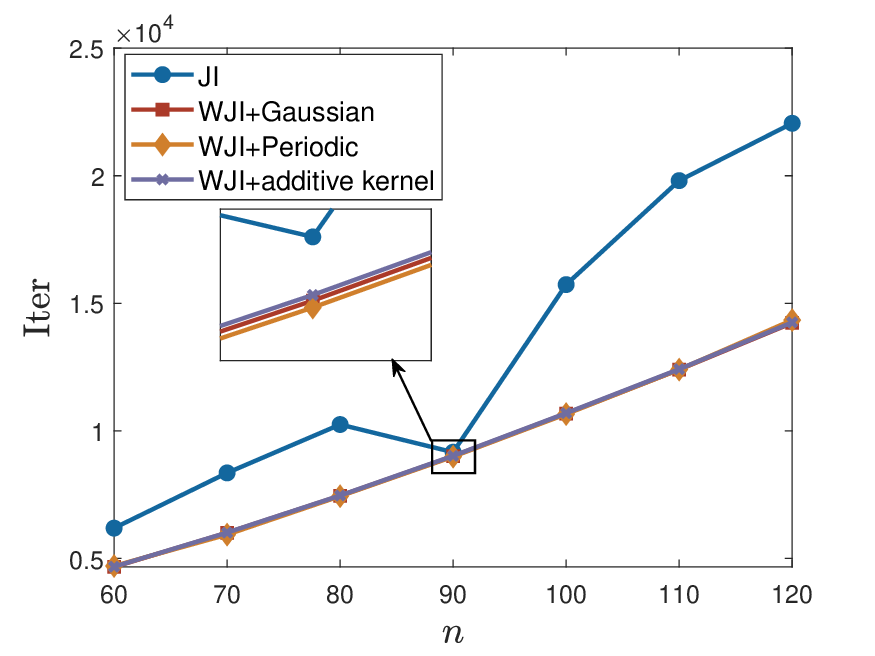}
		\caption*{(a){  Iteration versus $n$}}
	\end{minipage}
	\begin{minipage}[t]{0.43\textwidth}
		\centering
		\includegraphics[width=1\textwidth]{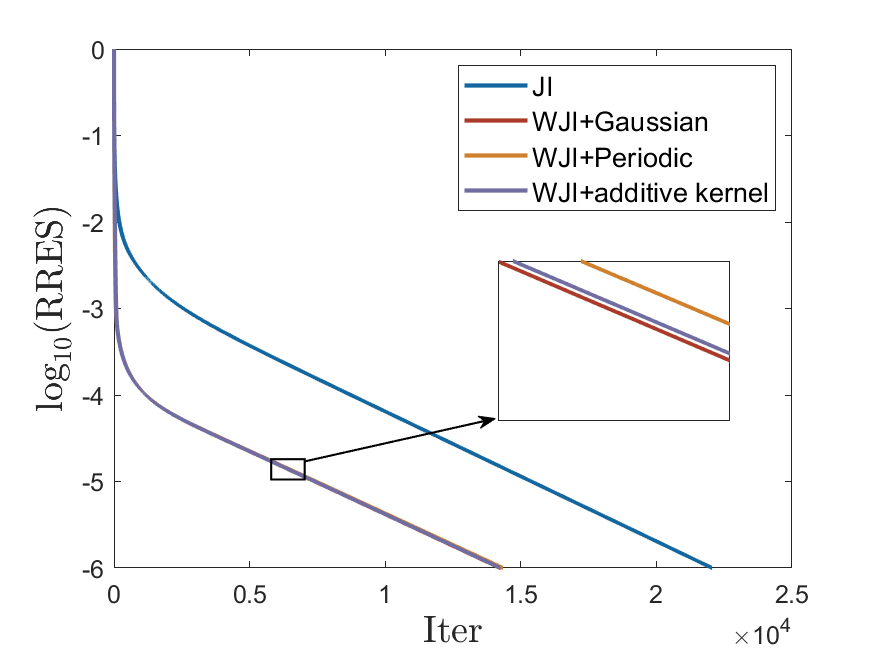}
		\caption*{(b){  $\log_{10}\text{RRES}$ versus iteration}}
	\end{minipage}\\
	\begin{minipage}[t]{0.43\textwidth}
		\centering
		\includegraphics[width=1\textwidth]{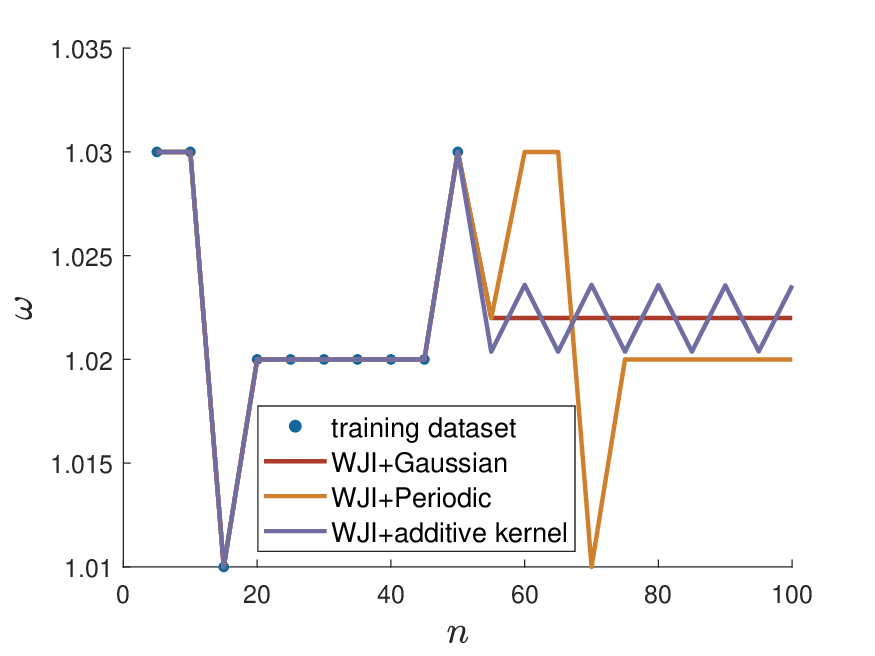}
		\caption*{(c) {  $n$ versus $\omega$}}
	\end{minipage}
	\begin{minipage}[t]{0.43\textwidth}
		\centering
		\includegraphics[width=1\textwidth]{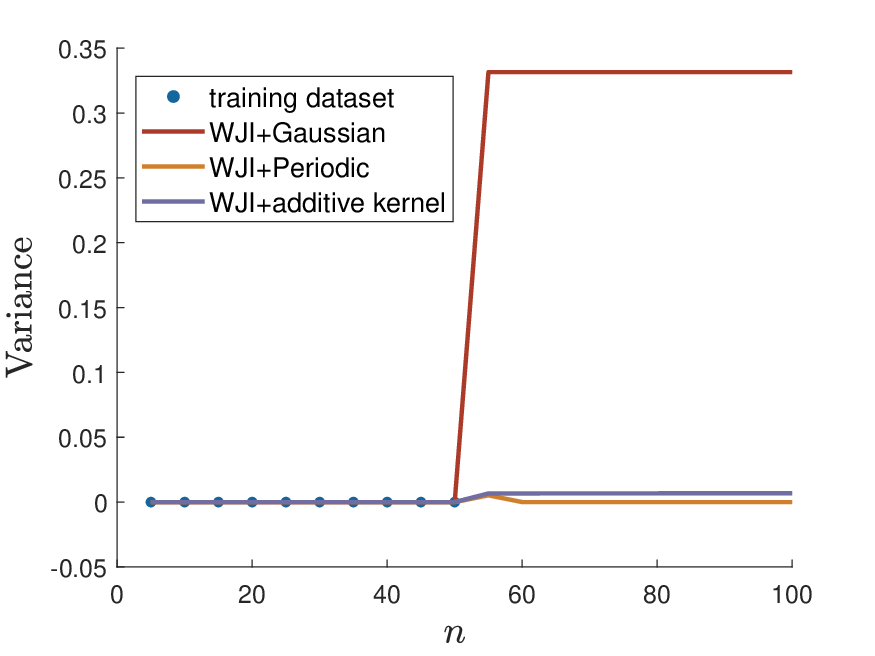}
		\caption*{(d){  Variance versus $n$}}
	\end{minipage}
	\caption{ {Numerical results for the 3D convection diffusion equation with Dirichlet  boundary conditions. (a): Curve of the number of iterations versus $n$; (b):Convergence curve of RRES when $n = 120$; (c): Values of $\omega$ for different $n$;(d): Variance Curves for different $n$.}}
	\label{Figure:3d_convection_diffusion}
\end{figure}

\begin{exm}
	\label{exm:3d-strong_convection_diffusion} Consider the three-dimensional convection-diffusion equation with
	{   a small diffusion coefficient and Dirichlet boundary condition}:
	\begin{equation}
		\label{eq:3d-strong_convection_diffusion}
		-\epsilon\left(\frac{\partial^2 u}{\partial x^2} + \frac{\partial^2 u}{\partial y^2}+\frac{\partial^2 u}{\partial z^2}\right)+\beta\left(\frac{\partial u}{\partial x}+\frac{\partial u}{\partial y}+\frac{\partial u}{\partial z}\right)  = f(x,y,z), 
	\end{equation}
	on the unit cube $\Omega = [0,1]\times [0,1]\times [0,1]$, where $\epsilon = 0.01$ and $\beta = 3$.
\end{exm}

We use the finite difference method with centered differencing for diffusion and backward differencing for convection to discretize (\ref{eq:3d-strong_convection_diffusion}) and obtain the linear system $Ax = b$. The coefficient matrix is constructed as
\[
A = A_x \otimes I \otimes I + I \otimes A_y \otimes I + I \otimes I \otimes A_z,
\]
where the one-dimensional discrete operators are given by:
\[
A_x = A_y = A_z = -\epsilon\cdot\text{tridiag}(-1, 2, -1)/h^2 + \beta\cdot\text{tridiag}(-1, 1, 0)/(h),
\]
and $h = 1/(n+1)$ is the grid spacing. Here $n$ is the number of interior grid points along each dimension. We choose $n = 60,~70,~ 80,~90,~100,~110,~120$. The vector $x \in \mathbb{R}^{n^3}$ is the unknown vector obtained by discretizing $u(x,y,z)$ in lexicographical order. The right-hand side $b \in \mathbb{R}^{n^3}$ is the discretization of $f(x,y,z)$, determined by the exact solution
\[
u_e(x,y,z) = \sin(\pi x)\sin(\pi y)\sin(\pi z),
\]
so that $f(x,y,z)$ is computed by applying the differential operator in (\ref{eq:3d-strong_convection_diffusion}) to $u_e$. The Dirichlet boundary values are imposed from $u_e$.

\begin{table}[htbp!]
	\caption{Iter and CPU of the JI and WJI based on GPR for { the 3D convection-diffusion equation with a small diffusion coefficient and Dirichlet boundary condition} }
	\label{Tab:3d_strong_convection_diffusion}
	\centering
	\begin{tabular}{ccccccccc}
		\hline
		\multirow{2}{*}{Iteration method} &     & \multicolumn{7}{c}{$n$}           \\
		\cline{3-9} 
		&     & 60    & 70   & 80 & 90  & 100 & 110 & 120 \\ 
		\hline
		\multirow{2}{*}{JI}           & Iter  & 300 & 397 & /& /& / & / & / \\
		\cline{2-9} 
		& CPU & 1.6303 &  3.1958  & / &/ & / & / & / \\  \hline
		\multirow{2}{*}{{ WJI+Gaussian}}         & Iter  & 223    & 283 &357& 451 & 571 &593 & 774 \\ 
		\cline{2-9} 
		& CPU & 1.4581 &2.8086  & 5.3823 &9.0178&16.199& 21.998 & 37.167 \\ 
		\hline
		\multirow{2}{*}{{ WJI+Periodic}}         & Iter  &  241   &  302 & 372 & 430 &509& 468 &548  \\ 
		\cline{2-9} 
		& CPU & 1.5621 & 3.0272 & 5.4891 &8.5613& 14.012 & 17.257 & 26.437  \\ 
		\hline
		\multirow{2}{*}{{ WJI+additive kernel}}         & Iter  &  225 &284  & 349 & 459 & 737 & /  & / \\ 
		\cline{2-9} 
		& CPU & 1.4530 & 2.8615 & 5.1777 & 9.1335&  19.94 &  / & /  \\ 
		\hline
	\end{tabular}
\end{table}

\begin{figure}[h!]
\begin{minipage}[t]{0.43\textwidth}
	\centering
	\includegraphics[width=1\textwidth]{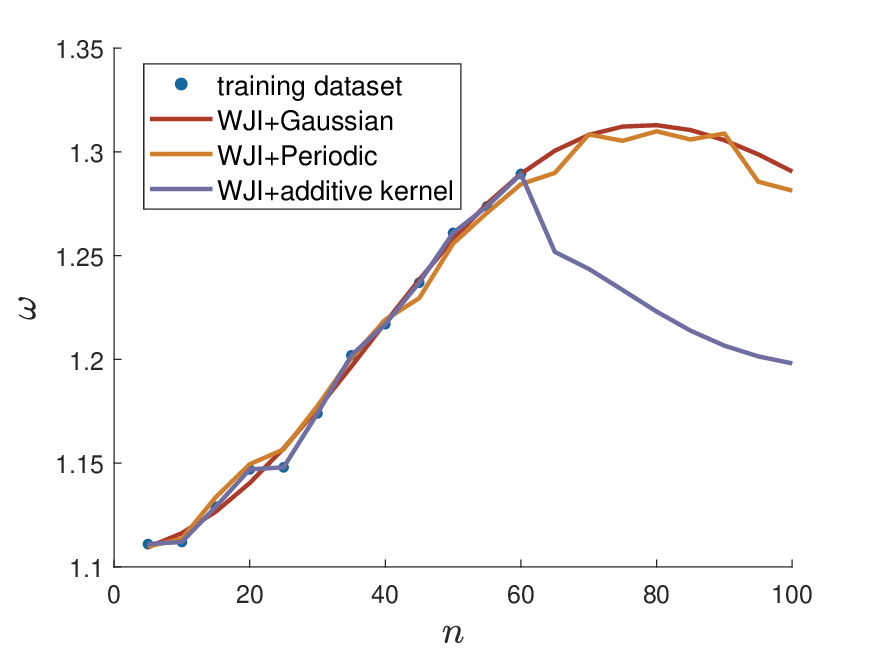}
	\caption*{(a) {  $n$ versus $\omega$}}
\end{minipage}
\begin{minipage}[t]{0.43\textwidth}
	\centering
	\includegraphics[width=1\textwidth]{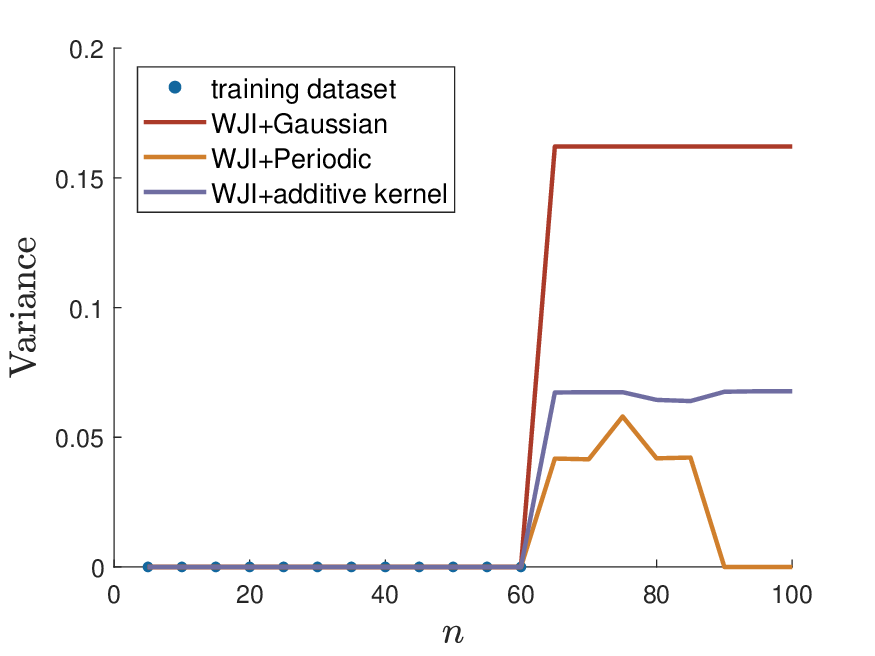}
	\caption*{(b) {  Variance versus $n$}}
\end{minipage}

\caption{  Prediction curves of parameter $\omega$ and corresponding variance curves under different $n$ in { the 3D convection-diffusion equation with a small diffusion coefficient and Dirichlet boundary condition}.}
\label{Figure:3d_strong_convection_diffusion}
\end{figure}

Table \ref{Tab:3d_strong_convection_diffusion} shows the numerical results for Example \ref{exm:3d-strong_convection_diffusion}. For $n \ge 80$, the JI fails to converge within 500000 iterations, whereas the WJI based on GPR converges within 1000 iterations. Among the kernels tested, the periodic kernel provides the best acceleration and exhibits the smallest prediction variance, as shown in Figure \ref{Figure:3d_strong_convection_diffusion} (b).
Figure \ref{Figure:3d_strong_convection_diffusion} (a) shows the predicted $\omega$ for different $n$. It can be observed that the predictions from the periodic kernel and the Gaussian kernel are close. However, they exhibit significant differences in their variances. This discrepancy may be attributed to the coefficient $C$ which is determined by the true function of $\omega$. Although the constant $C$ remains difficult to determine in practice, minimizing the predictive variance provides an effective criterion for improving prediction accuracy. {  From these results, we know that parameter predicted by GPR can accelerate the convergence of WJI, while the acceleration effect depends on the choice of kernel function, which can be evaluated by their prediction errors.}

\section{Conclusion}
\label{Conclusion}

We introduce a novel proof method for establishing the error bound of GPR using function decomposition. Additionally, we propose a WJI within the GPR parameter prediction framework and provide its convergence analysis, which is generalizable and can be combined with established GPR error bounds to enhance the theoretical framework. Numerical experiments apply GPR to WJI parameter selection with three different kernels, demonstrating that GPR-predicted parameters enhance JI convergence and validate the effectiveness of GPR for parameter prediction. In the future, we will investigate effective solution strategies for nonlinear problems and reliable parameter estimation methods and explore other efficient machine learning techniques to give proper parameters for algorithms, accompanied by corresponding error analysis. {  
Moreover, we will extend the proposed GPR-based parameter prediction framework to SOR iterations, where the relaxation parameter is embedded in the matrix inverse and cannot be explicitly extracted, and analyze how the error in predicting this parameter affects the convergence rate through its impact on the spectral radius.
Furthermore, we will investigate GPR-based prediction of parameters in the preconditioner of Krylov subspace methods. In the future, we will further study strategies to accelerate the weighted Jacobi iteration using parallel and mixed-precision approaches, and consider how to combine parameter prediction methods with these acceleration techniques.}

\section*{Acknowledgements}
	This work was supported by the National Natural Science Foundation of China under Grant (2023YFB3001604) and {  the Postgraduate Scientific Research Innovation Project of Hunan Province (CX20250937).}
	{ The authors would like to thank the associated editor and the anonymous referees for their constructive comments and suggestions, which substantially improved the paper.}
	
\section*{Declarations}
	\textbf{Author Contributions}  Tiantian Sun writes the original draft and performs all numerical experiments. Juan Zhang provides the core idea and critical revisions to the manuscript. All authors agree with the publication of the article.

	 \textbf{Conflict of interest}~The authors declare that they have no Conflict of interest.




\begin{thebibliography}{99}

\bibitem{A-A-S-2022}
A. Capone, A. Lederer, S. Hirche.
Gaussian process uniform error bounds with unknown hyperparameters for safety-critical applications{ ,}
{\it International Conference on Machine Learning}, 
2022,162: 2609-2624.

\bibitem{A-J-S-2019}
A.Lederer, J.Umlauft, S.Hirche.
Uniform error bounds for Gaussian process regression with application to safe control,
{\it Advances in Neural Information Processing Systems},
2019, 657-667.

\bibitem{C-C-2006}
C.K.I Williams, C.E.Rasmussen.
Gaussian processes for machine learning, Cambridge, MA: MIT press, 2006.


\bibitem{D-R-2025} 
D. Sanz-Alonso, R. Yang. 
Gaussian process regression under computational and epistemic misspecification, 
{\it SIAM Journal on Numerical Analysis}, 2025, 63(2): 495-519

{  \bibitem{Y-1954}
D. Young. 
Iterative methods for solving partial difference equations of elliptic type,
{\it Transactions of the American Mathematical Society}, 
1954, 76(1): 92-111.}


\bibitem{F-D-S-M-2013}
F.Dondelinger, D.Husmeier, S.Rogers, M.Filippone.
ODE parameter inference using adaptive gradient matching with Gaussian processes,
{\it Artificial intelligence and statistics, PMLR},
2013, 31: 216-228, 


\bibitem{K-A-M-T-D-2022}
K.Hashimoto, A.Saoud b, M.Kishida, T.Ushio, D.V.Dimarogonas.
Learning-based symbolic abstractions for nonlinear control systems, 
{\it Automatica}, 
2022, 146: 110646.

\bibitem{J-S-Z-2022}
K.Jiang, X.Su, J.Zhang.
A general alternating-direction implicit framework with Gaussian process regression parameter prediction for large sparse linear systems,
{\it SIAM Journal on Scientific Computing}, 
2022, 44(4): A1960-A1988.


\bibitem{K-J-Q-2023}
K.Jiang, J.Zhang, Q.Zhou.
Multitask kernel-learning parameter prediction method for solving time-dependent linear systems,
{\it CSIAM Transactions on Applied Mathematics}, 
2023, 4(4): 672-695.

{ 
\bibitem{L-Q-K-H-2025}
L. Liang, Q. Pang, K. C. Toh, H. Yang. 
Nesterov's Accelerated Jacobi-Type Methods for Large-Scale Symmetric Positive Semidefinite Linear Systems, 
{\it SIAM Journal on Scientific Computing}, 
2025, 47(6): A3494-A3515.
}

\bibitem{M-D-C-2013}
M.P.Deisenroth, D.Fox, C.E.Rasmussen.
Gaussian processes for data-efficient learning in robotics and control,
{\it IEEE transactions on pattern analysis and machine intelligence}, 
2013, 37(2): 408-423.

\bibitem{M-P-D-B-2018}
M. Kanagawa, P. Hennig, D. Sejdinovic, B. K. Sriperumbudur{ .} Gaussian processes and kernel
methods: A review on connections and equivalences,
{\it arXiv},
2018. 

\bibitem{N-A-S-M-2012}
N.Srinivas, A. Krause, S.M.Kakade, M.W.Seeger.
Information-theoretic regret bounds for gaussian process optimization in the bandit setting,
{\it   IEEE transactions on information theory,}
2012, 58(5): 3250-3265.

\bibitem{P-A-R-M-2004}
P.G.Ciarlet, A.Iserles, R.V.Kohn, M.H.Wright.
Scattered data approximation{ ,}
Cambridge, Cambridge university press, 2004.

\bibitem{P-P-J-2016}
P.P.Pratapa, P.Suryanarayana, and J.E.Pask.
Anderson acceleration of the Jacobi iterative method: An efficient alternative to Krylov methods for large, sparse linear systems,
{\it Journal of Computational Physics}, 2016, 306: 43-54.


\bibitem{R-1999}
R.Schaback.
Native Hilbert spaces for radial basis functions I,
{\it New Developments in Approximation Theory }, 
1999, 132: 255-282.

\bibitem{R-W-2020}
R. Tuo, W. Wang. 
Kriging prediction with isotropic Mat\'ern correlations: Robustness and experimental designs,
{\it Journal of Machine Learning Research}, 
2020, 21(187): 1-38.

\bibitem{R-M-L-A-D-2014}
R.D\"{u}richen, M.A.F.Pimentel, L.Clifton, A.Schweikard, D.A.Clifton.
Multitask Gaussian processes for multivariate physiological time-series analysis,
{\it IEEE Transactions on Biomedical Engineering}, 
2014, 62(1): 314-322.

\bibitem{R-L-M-2025}
R.Reed, L.Laurenti, M.Lahijanian.
Error bounds for Gaussian process regression under bounded support noise with applications to safety certification,
{\it Proceedings of the AAAI Conference on Artificial Intelligence},
2025, 39(19): 20157-20164.

\bibitem{S-J-P-E-2023}
S.J.Petit, J.Bect, P.Feliot, E. Vazquez. 
Parameter selection in Gaussian process interpolation: an empirical study of selection criteria,
{\it SIAM/ASA Journal on Uncertainty Quantification}, 
2023, 11(4): 1308-1328.

\bibitem{T-T-F-2023}
T.Martin, T.B. Sch\"{o}n, F.Allg\"{o}wer.
Guarantees for data-driven control of nonlinear systems using semidefinite programming: A survey,
{\it Annual Reviews in Control},
2023, 56: 100911.

\bibitem{W-B-2022}
W.Wang, B.Y.Jing. 
Gaussian process regression: Optimality, robustness, and relationship with kernel ridge regression{ ,}
{\it Journal of Machine Learning Research}, 
2022, 23(193): 1-67.

\bibitem{W-P-W-2011}
W.Scott, P.Frazier, and W.Powell.
The correlated knowledge gradient for simulation optimization of continuous parameters using gaussian process regression,
{\it SIAM Journal on Optimization},
2011, 21(3): 996-1026.

\bibitem{W-R-C-2020}
W.Wang, R.Tuo, C.F.Jeff Wu.
On prediction properties of kriging: Uniform error bounds and robustness,
{\it Journal of the American Statistical Association}, 
2020, 115(530):920-930.

\bibitem{X-R-2014}
{ 
X. Yang, R. Mittal. 
Acceleration of the Jacobi iterative method by factors exceeding 100 using scheduled relaxation,
{\it Journal of Computational Physics}, 
2014, 274: 695-708.
}

\bibitem{Y-1954}
D. Young. 
Iterative methods for solving partial difference equations of elliptic type,
{\it Transactions of the American Mathematical Society}, 
1954, 76(1): 92-111.

\bibitem{Y-2003}
{ Y.Saad. 
Iterative Methods for Sparse Linear Systems, 2nd ed,
{\it Philadelphia, PA: Society for Industrial and Applied Mathematics}, 
2003.}

\bibitem{Y-B-H-A-2021}
Y.Chen, B.Hosseini, H.Owhadi, A.M.Stuart.
Solving and learning nonlinear PDEs with Gaussian processes,
{\it Journal of Computational Physics}, 
2021, 447: 110668.









\end{thebibliography}



\end{document}